\newtheorem{thm}{Theorem}[section]
\newtheorem{lem}[thm]{Lemma}
\newtheorem{prop}[thm]{Proposition}
\theoremstyle{definition}
\newtheorem{defn}[thm]{Definition}
\theoremstyle{remark}
\newtheorem{rem}[thm]{Remark}
\numberwithin{equation}{section}
\newcommand{\Real}{\mathbb R}
\newcommand{\Integer}{\mathbb Z}
\newcommand{\F}{\mathcal{F}}
\newcommand{\prob}{\mathbb{P}}
\newcommand{\qprob}{\mathbb{Q}}
\newcommand{\expec}{\mathbb{E}}
\newcommand{\cadlag }{c\`adl\`ag}
\newcommand{\tv}{\tilde{v}}
\newcommand{\indic}{\mathbb{I}}
\newcommand{\pare}[1]{\left(#1\right)}
\newcommand{\bra}[1]{\left[#1\right]}
\newcommand{\wt}[1]{\widetilde{#1}}
\newcommand{\nc}{\newcommand}
\nc{\ba}{\begin{array}} \nc{\ea}{\end{array}}
\nc{\be}{\begin{equation}} \nc{\ee}{\end{equation}}
\nc{\bea}{\begin{eqnarray}} \nc{\eea}{\end{eqnarray}}
\nc{\bean}{\begin{eqnarray*}} \nc{\eean}{\end{eqnarray*}}
\nc{\bu}{\bullet} \nc{\nn}{\nonumber} \nc{\cA}{{\mathcal A}}
\nc{\cB}{{\mathcal B}} \nc{\cC}{{\mathcal C}} \nc{\cD}{{\mathcal
D}}  \nc{\cN}{{\mathcal
N}}\nc{\bbD}{\mathbb{D}} \nc{\cG}{{\mathcal G}} \nc{\cF}{{\mathcal
F}}  \nc{\cR}{{\mathcal
R}}\nc{\cU}{{\mathcal U}} \nc{\cH}{{\mathcal H}}
\nc{\cK}{{\mathcal K}} \nc{\cM}{{\mathcal M}} \nc{\cP}{{\mathcal
P}}  \nc{\bbE}{\mathbb{E}} \nc{\bbEQ}{\mathbb{E}^{\mathbb{Q}}}
\nc{\eps}{\varepsilon}\nc{\bbU}{\mathbb{U}}
\nc{\bbEP}{\mathbb{E}_{\mathbb{P}}}\nc{\bbL}{\mathbb{L}}
\nc{\bbP}{\mathbb{P}} \nc{\bbQ}{\mathbb{Q}} \nc{\Om}{\Omega}
\nc{\om}{\omega} \nc{\bbR}{\mathbb{R}} \nc{\bbC}{\mathbb{C}}
\nc{\bfr}{\begin{flushright}} \nc{\efr}{\end{flushright}}
\nc{\dXt}{\Delta X_{t}} \nc{\dXs}{\Delta X_{s}}
\nc{\bs}{\blacksquare} \nc{\dX}{\Delta X} \nc{\dY}{\Delta Y}
\nc{\dnkx}{\left(X(T^{n}_{k})-X(T^{n}_{k-1})\right)}
\nc{\dom}{depth-of-the-market } \nc{\uar}{\uparrow}
\nc{\dar}{\downarrow}\nc{\rar}{\rightarrow} \nc{\esssup}{\mathrm{ess}\mbox{ }\mathrm{sup}}
\nc{\half}{\frac{1}{2}}\nc{\ol}{\overline}
 \nc{\hbE}{\hat{\bbE}}
\nc{\what}{\widehat} \nc{\fhat}{\what{f}}  \nc {\parx}{\frac{\partial}{\partial x}} \nc
\def\rar{\rightarrow}
\def\dar{\downarrow}
\nc{\chf}{\mbox{$\mathbf1$}} \nc{\eid}{\stackrel{d}{=}}
\begin{document}
\title{Point process bridges and weak convergence of insider trading models}
\thanks{This research is supported in part by STICERD at London School of Economics.}
\date{\today}

\author[]{Umut \c{C}etin}
\address{Department of Statistics, London School of Economics and Political Science, 10 Houghton st, London, WC2A 2AE, UK}
\email{u.cetin@lse.ac.uk, h.xing@lse.ac.uk}

\author[]{Hao Xing}

\keywords{point process bridge, Glosten-Milgrom model, Kyle model, insider trading, equilibrium, weak convergence}

\begin{abstract}
 We construct explicitly a bridge process whose distribution, in its own filtration, is the same as the difference of two independent Poisson processes with the same intensity and its time $1$ value satisfies a specific constraint. This construction allows us to show the existence of Glosten-Milgrom equilibrium and its associated optimal trading strategy for the insider. In the equilibrium the insider employs a mixed strategy to randomly submit two types of orders: one type trades in the same direction as noise trades while the other cancels some of the noise trades by submitting opposite orders when noise trades arrive. The construction also allows us to prove that Glosten-Milgrom equilibria converge weakly to Kyle-Back equilibrium, without the additional assumptions imposed in \textit{K. Back and S. Baruch, Econometrica, 72 (2004), pp. 433-465}, when the common intensity of the Poisson processes tends to infinity.
\end{abstract}

\maketitle
\section{Introduction}
In this paper we perform an explicit construction of a particular
bridge process associated to a point process that arises in the
solution of Glosten-Milgrom type insider trading models from Market
Microstructure Theory. Our starting point is the work of Back and
Baruch \cite{Back-Baruch} who studies a class of equilibrium models of
insider trading (of Glosten-Milgrom type) and their convergence to
Kyle model.

In Glosten-Milgrom type insider trading models, there exists an insider
who possesses the knowledge of the time 1 value of the asset given by
the random variable $\tv$. There is
also another class of traders, collectively known as noise traders, who trade without this insider knowledge. Their trades are of the same size and arrive at Poisson times which are assumed to be independent of $\tv$. The insider trades using her extra information in order to maximise her expected wealth at time $1$ but taking into account that her trades move the prices to
her disadvantage since the price is an increasing function of the total
demand for the asset. Moreover, in order to hide her trades, and thus her private information, she will
also submit orders that are of the same size as noise trades. The price of the asset in this market is determined by a market
maker in the equilibrium whose precise definition is given in Section
\ref{sec: model}.

In the specific model that we will study (and also studied in
\cite{Back-Baruch}) $\tv$ takes values in $\{0,1\}$. Since the noise buy and sell orders arrive at Poisson times and are of
the same size, the net $Z$ of cumulative buy and sell noise trades,
after normalization, is given by the difference of two independent Poisson processes. Writing $Y=Z+X$ for the total demand for the asset, where $X$ denotes the trading
strategy of the insider, we will see in Theorem \ref{t:equilibrium} that a Glosten-Milgrom equilibrium exists if
\begin{enumerate}
 \item[(i)] $Y$ in its own filtration has the same distribution as $Z$,
 \item[(ii)] $[Y_1 \geq y] = [\tv=1]$ almost surely for some $y$ to be determined.
\end{enumerate}
The second condition above implies that in the equilibrium the insider drives the process $Y$ so that the event whether $Y_1$ is larger than $y$ is predetermined at time $0$ from the point of view of the insider, since the set $[\tv=1]$ is at the disposal of the insider already at time $0$. Given this characteristic of $Y$, it can be called (with a slight abuse of terminology) a \emph{point process bridge}.

In Section \ref{sec: bridge construction}, we explicitly construct a pure jump process $X$ whose jump size is the same as that of $Z$ and $Y=X+Z$ satisfies aforementioned conditions.  From the point of view of filtering theory $X$ can be considered as the  unobserved `drift' added to the martingale $Z$. The
specific choice of $X$ used in the bridge construction ensures that
this drift {\em disappears} when we consider  $Y$ in its own
filtration.

To the best of our knowledge such a bridge construction has not been
studied in the literature before. On the other hand, the analogy with
the enlargement of filtration theory for Brownian motion is
obvious. Indeed, if $Z$ is instead a Brownian motion and  we consider the problem of finding a stochastic
process $X$ so that $Y=Z+X$ is a Brownian motion in its own filtration
and $[\tv=1]=[Y_1\geq y]$ almost surely for some $y \in \bbR$ to be determined, the
solution follows easily from  the
enlargement of filtration theory. The recipe is the following: Find
the Doob-Meyer decomposition of $Z$ when its natural filtration is initially enlarged
with the random variable $[Z_1\geq y]$. Then, in the finite variation
part of this decomposition, replace $Z$ with $Y$ and $[Z_1\geq y]$ with
$[\tv=1]$ to find $X$. This recipe gives
\be \label{e:Xint}
X=\indic_{[\tilde{v}=1]}\int_0^\cdot{\partial_y}
\log p^0(Y_s,s)\,ds+\indic_{[\tilde{v}=0]}\int_0^\cdot{\partial_ y}\log(1-
p^0(Y_s,s))\,ds,
\ee
where $p^0$ is the function given in (\ref{e:kbp}).
From the insider trading point of view,  $X$ defined by  (\ref{e:Xint}) is the insider's optimal trading strategy in a Kyle model, see Remark \ref{r:kbe} in this respect.  The counterpart of these arguments in the theory of enlargement of filtrations for jump processes also exists in the literature, see \cite{Kohatsu-Higa-Yamazato}.

Yet the above recipe does not work when $Z$ is the
difference of independent Poisson processes. The problem is that the
enlargement of filtration technique gives us the  decomposition of $Z$
as a sum of a martingale and an absolutely continuous process. This is
clearly not useful for the construction that we are after, since we
want to write $Y$ as sum of $Z$ and $X$ which changes only by jumps.
The desired jump process $X$ is constructed explicitly in Section \ref{sec: bridge construction} using $[\tv=1]$ and a sequences of iid uniformly distributed random variables independent of everything else. This amounts to say that the insider uses her private information and some additional randomness from uniformly distributed random variables to construct her optimal strategy.  Moreover, we will see in Section \ref{sec: equilibrium
  existence} that, after an appropriate rescaling,  these jump processes converge weakly to $X$ given by (\ref{e:Xint})
as the intensity of the Poisson processes that constitute $Z$ increases to infinity. Note the process $X$ given in (\ref{e:Xint}) does not need any extra randomness other than the set $[\tilde{v}=1]$. This brings fore the question whether the bridge process defined in Section \ref{sec: bridge construction}  can alternatively be constructed without the aid of the extra randomness. We believe this would be a quite interesting avenue for further research.

The construction of the point process bridge $Y$ allows us to prove the existence of Glosten-Milgrom equilibrium (see Theorem \ref{thm: existence equilibrium}) which was demonstrated in \cite{Back-Baruch} via a numeric computation. In such an equilibrium the insider uses a mixed strategy to randomly submit two types of orders: one type trades in the same direction as noise trades while the other cancels noise trades by submitting opposite orders when noise trades arrive. Observing noise trades, the insider uses the uniformly distributed random variables to construct her strategy inductively. On the other hand, the construction of $Y$ invites a natural application of weak convergence theory to show Glosten-Milgrom equilibria converge weakly to Kyle equilibrium when the intensity of $Z$ increases to infinity. This convergence was first proved in \cite{Back-Baruch} under strong assumption on the convergence of value functions. Utilising the theory of weak convergence, we are able to prove the result of Back and Baruch on convergence without the additional assumptions; see Theorem \ref{thm: convergence}.

The outline of the paper is as follows. In Sections \ref{sec: model}
and \ref{sec: equilibrium HJB} we
describe the Glosten-Milgrom model and characterise its equilibrium
which is the motivation of this paper. Section \ref{sec: bridge
  construction} discusses the construction of the aforementioned point
process bridge. In Section \ref{sec: equilibrium existence} we apply
the results of Section \ref{sec: bridge
  construction} to show the existence of Glosten-Milgrom equilibria
and discuss their weak convergence.
\section{The model}\label{sec: model}

We consider a market in continuous-time for a risky asset whose
{\em fundamental value} is given by $\tv$.
The investors in this market can also trade a riskless asset at an interest rate
normalised to $0$ for simplicity.  Following \cite{Back-Baruch} we
assume that $\tv$ has two states: high and low, which correspond to two numeric representations respectively, $1$ and $0$. This fundamental value will be revealed to the market participants at time $1$ at which point
we assume the market for the risky asset will
terminate\footnote{\cite{Back-Baruch} assumes that the market has a
  random horizon defined by an independent exponential random variable. However,
  one can see that
  this distinction is not relevant by comparing our results to those
of Back and Baruch.}.

The microstructure of the market, and the interaction of market
participants, is modelled similarly as in \cite{Back-Baruch}. There are
three types of agents: noisy/liquidity traders, an informed
trader (insider), and a market maker, all of whom are risk
neutral. All the processes and random variables in this section are defined on a filtered probability space $(\Om, \cF, (\cF_t)_{t \in [0,1]}, \bbP)$ satisfying the usual conditions. We assume that $\tv$ is indeed random, i.e. $\bbP(\tv=0) \in (0,1)$.

\begin{itemize}
\item \textit{Noisy/liquidity traders} trade for liquidity reasons, and
their total demand is given by the difference of two pure jump processes $Z^B$ and $Z^S$, which represent
their cumulative buy and sell orders, respectively. As such, the net
order flow of the noise traders are given by
$Z:=Z^B-Z^S$. Noise traders only submit orders of fixed size $\delta$ every time they trade. As in \cite{Back-Baruch}, $Z^B/\delta$ and $
Z^S/\delta$ are assumed to be independent Poisson processes with constant intensity
$\beta$. Moreover, they are independent of $\tv$.
\item \textit{The informed trader} observes the market price process
  and is given the value of $\tv$ at time $0$. The net order of the
  insider is denoted by $X:=X^B-X^S$ where $X^B$ (resp. $X^S$)
  denotes the cumulative buy (resp. sell) orders of the insider.
\item A competitive \textit{market maker} observes only the total net demand process $
Y_{t}=X_t +Z_t$ and sets the price based solely on this information.  This in particular implies that the market maker's
filtration is $(\mathcal{F}_{t}^{Y})$, the minimal filtration generated by $Y$ satisfying the usual conditions. We assume that the market maker is
risk neutral and, thus, the competitiveness means that he sets the price at
$\bbE[\tv|\cF^Y_t]$ in the equilibrium.
\end{itemize}

Although the noise traders trade for liquidity reasons exogenous to this model,
the insider has the objective to maximise her expected profit out of
trading. This strategic behaviour of the insider and the pricing
mechanism set by the market maker as described above results in the
price being determined in an equilibrium. In order to define precisely
what we mean by an equilibrium between the market maker and the
insider,  we first need to establish the class of
{\em admissible} actions available to both.
\begin{defn}\label{def: rational pricing rule} A function $p:\delta \Integer \times [0,1]  \rightarrow [0,1]$ is a \emph{pricing rule} if
 \begin{enumerate}
  \item[i)] $y\mapsto p(y, t)$ is strictly increasing for each $t\in[0,1)$;
  \item[ii)] $t \mapsto p(y, t)$ is continuously differentiable for each $y \in \delta \Integer$.
 \end{enumerate}
\end{defn}
 This Markov assumption on
 the pricing functional is standard in the literature (see,
 e.g., \cite{Back}, \cite{BP} or \cite{CCD}). Given the pricing rule,
 the market maker sets the price to be $p(Y_t, t)$. It would be
 irrational for the market maker to price the asset at some value
 larger than $1$ or less than $0$ since everybody knows that the true
 value of the asset is $0$ or $1$. As we mentioned above the market
 maker is competitive so that in equilibrium the price equals
 $\expec[\tilde{v} \,|\, \F^Y_t]$. Hence, $p$ is typically $[0,1]$-valued.
 The monotonicity of $p(\cdot, t)$ implies that an increase in demand
 has a positive feedback on the asset price. Moreover, this leads the insider to fully observe the noise trades, $Z$, by simply inverting the
 price process and subtracting her own trades from it. Consequently, the insider's filtration, denoted with $\cF^I$,
 contains the
 filtration generated by $Z$ and $\tv$. We shall assume $\F^I$ satisfies the usual conditions. However, we refrain from setting $\cF^I$ equal to the filtration generated by $Z$ and initially enlarged with $\tv$ since we will only be able to show the existence of equilibrium if the insider also possess a sequence of independent random variables, which she will use in order to construct her  mixed strategy. Admissible strategy of the insider is defined as follows.
\begin{defn}\label{def: admissible}
 The strategy $(X^B, X^S; \F^I)$ is \emph{admissible}, if
 \begin{enumerate}
  \item[i)] $\cF^I$ is a filtration satisfying the usual conditions
    such that $\cF^I_t=\sigma(v, \cF^Z_t, \cH_t)$, where $\cH$ is a
    filtration independent of $v$ and $\cF^Z$.
\item[ii)] $X^B$ and $X^S$, with $X_0^B = X_0^S=0$, are
    $\cF^I$-adapted and integrable\footnote{That is, $\expec[X^B_1]$ and $\expec[X^S_1]$ are both finite.} increasing point processes with
    jump size $\delta$;
  \item[iii)] the $(\cF^I, \bbP)$-dual predictable
    projections\footnote{These are simply the predictable compensators of the
      increasing processes $X^B$ and $X^S$. See, e.g. \cite{Jacod} for
      a precise definition.}  of $X^B$ and $X^S$ are absolutely
    continuous functions of time.
 \end{enumerate}
\end{defn}
The first assumption on $\cF^I$ makes the insider's filtration part of
the equilibrium. This is to allow mixed strategies which will be
determined in equilibrium. Note that the additional information can
only  come from a source that is independent of $Z$. This implies in
particular that the insider does not have any extra information about the
future demand of the noise traders. Although we allow this additional
source of information to vary in time, in the form of filtration
$\cH$, in the equilibrium that we will compute, $\cH_t=\cH_0$ for all
$t \in [0,1]$.

We assume that the insider can only trade $\delta$-shares of the asset
in every trade like the noise traders. This is one of the underlying
assumptions of the Glosten-Milgrom model, which we keep in this paper
as well. One intuitive reason for this is that  a rational insider will never submit an order of a
different size, since this will immediately reveal her identity and
make, at least a part of, her private information public causing to
lose her comparative advantage. Moreover, in order to make this
argument rigorous one needs to make assumptions on the  pricing rule as to
how to handle the orders of sizes which are multiples of
$\delta$. One can do the pricing uniformly, i.e. every little bit of the
order is priced the same,  or different parts of the order is priced
differently as one walks up or down in an order book (see \cite{BB2} for a
discussion of such issues). However, this requires different
techniques for the analysis of optimal strategies given this
complicated nature of pricing; thus, we leave such analysis to a
future investigation.

 The third assumption on the dual predictable projections implies that $X^B$ and $X^S$ admit $\F^I$-intensities $\theta^B$ and $\theta^S$ such that $X^B -\int_0^\cdot \theta^B_s \, ds$ and $X^S - \int_0^\cdot \theta^S_s\, ds$ are $\F^I$-martingales (see \cite[Chapter 1, Theorem 3.15]{Jacod-Shiryaev}).
This assumption is technical and to ensure tractability.

Given that the insider submits orders of size $\delta$ and the
assumption that the market maker observes only the net demand yield
that when the insider submits an order at the same when an uninformed
order arrives but in the opposite direction, i.e. a trade between the
informed and uninformed occurs without a need for a market maker, this
transaction goes unnoticed by the market maker. Thus, what we are
affectively assuming is that the marke maker only becomes aware of the
transaction when there is a need for him. The assumption that the
market maker only observes the net demand is a common assumption in
market microstructure literature. In particular, it is always assumed
in Kyle type models (see, e.g. \cite{BAPT}). Henceforth, when the insider makes a
trade at the same with an uninformed trader but in an opposite
direction, we will say that the insider {\em cancels} the noise trades.

Although we allow the insider to trade at the same time with the noise
traders in the same direction,  we will see that in the equilibrium
the insider will not carry such trades. This is intuitive. does not
trade in the same direction at the same time as the uniformed trades,
but she does randomly cancel part of uninformed orders. Both actions
are required to hide her identity from the market maker.  Indeed, when
two buy orders arrive at the same time the market maker will know that
one of them is an informed trade. Therefore it would be to the
advantage of the insider  to hide her trades by submitting randomly,
but of the same size, among the uninformed trades. On the other hand,
since the market maker is not aware of the transactions which consist
in canceling noise trades, submitting an order at the same time with
the noise traders but in the opposite direction is not necessarily
suboptimal. We will in fact see that the insider does randomly cancel some
trades that are placed by the noise traders in the equilibrium.


As discussed in the last paragraphs, the insider's buy orders $X^B$ consist of three components: we denote by $X^{B,B}$ the cumulative buy orders which arrive at different time than those of $Z^B$, by $X^{B, T}$ the cumulative buy orders which arrive at the same time as some orders of $Z^B$, and by $X^{B,S}$ the cumulative buy orders which cancel some sell orders of $Z^S$. As such, the jump time of $X^{B,T}$ (resp. $X^{B,S}$) are contained in the set of jump times of $Z^B$ (resp. $Z^S$). Sell orders $X^{S,S}, X^{S, T}$, and $X^{S, B}$ are defined analogously. Therefore $X^B = X^{B,B} + X^{B, T} + X^{B,S}$ and $X^S = X^{S,S} + X^{S, T} + X^{S,B}$.

As mentioned earlier, the insider aims to maximise her expected
profit. Given an admissible trading strategy $(X^B, X^S)$ the
associated profit at time $1$ of the insider is given by
\[
\int_0^1 X_{t-}\, dp(Y_t,t)  + (\tv -p(Y_1,1))X_1.
\]
The last term appears due to a potential discrepancy between the market price and the liquidation value. Since $X$ is of finite variation, an application of integration by parts rewrites the above as
\bean
&&\int_0^1 (\tv -p(Y_t,t))\,dX^B_t -\int_0^1 (\tv -p(Y_t, t))\,dX^S_t\\
&=&\quad \int_0^1(\tv -p(Y_{t-}+\delta, t))\,dX^{B,B}_t + \int_0^1 (\tv-p(Y_{t-} + 2\delta, t)) \, dX^{B, T}_t + \int_0^1 (\tv - p(Y_{t-}, t)) \, dX^{B,S}_t\\
&& -\int_0^1 (\tv
-p(Y_{t-}-\delta, t))\,dX^{S,S}_t - \int_0^1 (\tv- p(Y_{t-}-2\delta, t)) \, dX^{S, T}_t - \int_0^1 (\tv- p(Y_{t-}, t))\, dX^{S, B}_t,
\eean
where the last line is due
to the fact that $Y$ increases by $\delta$ when $X^{B,B}$
jumps, increases by $2\delta$ when $X^{B,T}$ jumps, and is unchanged when $X^{B,S}$ and $Z^S$ jump at the same time but different directions. Similar situation goes for negative jumps of $Y$. As seen from the above formula, the profit is zero when the insider place two opposite orders as the same time, we then assume without loss of generality that insider does not do so.

Let's define
\[
a(y,t):=p(y+\delta,t) \qquad \mbox{and} \qquad b(y,t)=p(y-\delta,t).
\]
Then, the expected profit of the insider conditional on her information equals
\begin{equation}\label{eq: insider problem}
\begin{split}
  &\expec_\prob \left[\int_0^1 (\tv - a(Y_{t-},t)) \,dX^{B,B}_t + \int_0^1 (\tv - a(Y_{t-} + \delta, t)) \, dX^{B, T}_t + \int_0^1 (\tv - p(Y_{t-}, t)) \, dX^{B,S}_t \right. \\
  & \quad\left.\left. - \int_0^1 (\tv- b(Y_{t-},t)) \,dX^{S,S}_t - \int_0^1 (\tv- p(Y_{t-}-\delta, t)) \, dX^{S,T}_t - \int_0^1 (\tv- p(Y_{t-}, t)) \, dX^{S,B} \right| \tv\right].
\end{split}
\end{equation}
Note that the assumption $\bbE[X^B_1] < \infty$ implies $\bbE[X^B_1|\tv] < \infty$ as well since $\bbE[X^B_1]=\bbE[X^B_1|\tv=1]\bbP[\tv=1]+ \bbE[X^B_1|\tv=0]\bbP[\tv=0]$, and $\bbP[\tv=0] \in (0,1)$.  Similarly, $\bbE[X^S_1|\tv] < \infty$, too. Thus, the above expectation will be finite as soon as we assume
that the pricing rule is rational in the sense that it assigns a price
to the asset between $0$ and $1$. This will be part of the definition
of equilibrium, which will be made precise below. As seen from the above formulation, when price moves, one buys (resp. sells) at a price  $a(y,t)$ (resp. $b(y,t)$), where $y$ is the cumulative order right before such trade. Thus, $a(y,t)$ (resp. $b(y,t)$) can be viewed as the ask (resp. bid) price.


Our goal is to find an equilibrium between the market maker and the
insider in the following fashion:
\begin{defn}\label{def: equilibrium}
 A \emph{Glosten-Milgrom equilibrium} is a quadruplet $(p, X^B, X^S, \F^I)$
 such that
\begin{itemize}
\item[i)] given $(X^B, X^S; \F^I)$, $p$ is a {\em rational} pricing rule, i.e., $p(Y_t, t)= \expec[\tv \,|\, \F^Y_t]$ for $t\in[0,1]$;
\item[ii)] given $p$, $(X^B, X^S; \F^I)$ is an admissible strategy
maximising  \eqref{eq: insider problem}.
\end{itemize}
\end{defn}

Recall that  $\tv$ takes only two values by assumption. In view of
this specification we will often call the insider in the sequel of \emph{high type} when $\tv=1$ and \emph{low type} when $\tv=0$.

\section{Characterisation of equilibrium}\label{sec: equilibrium HJB}
Before we give a characterisation of equilibrium, we will provide some
heuristics. Due to the Markov structure of the pricing rule, we will
define the informed trader's value function and derive, via a
heuristic argument, the associated HJB equation. Definition \ref{def:
  admissible} ii) implies that the $\F^I$-dual predictable projection
of $X^{i,j}$, $i\in \{B, S\}$ and $j\in \{B, S, T\}$, is of the form
$\delta \int_0^\cdot \theta^{i,j}_s \, ds$ so that $X^{i,j} - \delta
\int_0^\cdot \theta^{i,j}_s \, ds$ defines an
$\F^I$-martingale. Observe that since the set of jumps times of $X^{B,
  S}$ and $X^{S, T}$ (resp. $X^{S, B}$ and $X^{B, T}$) is contained in
the set of jump times of $Z^S$ (resp. $Z^B$), we necessarily have
$\theta^{B, S} + \theta^{S, T}\leq \beta$ (resp. $\theta^{S, B} +
\theta^{B, T}\leq \beta$). Moreover, Definition \ref{def: equilibrium}
i) implies that $p$ takes values in $[0,1]$,  hence  both bid and ask
prices are $[0,1]$-valued by definition. Therefore, $\int_0^\cdot
(\tv- a(Y_{u-}, u)) (dX^{B,B}_u - \delta \theta^{B,B}_u \, du)$ is an
$\F^I$-martingale (see \cite[Chapter 1, T6]{Bremaud}). Arguing
similarly with the other terms, the expected profit \eqref{eq: insider problem} can then be expressed as
\[
\begin{split}
 &\delta \,\expec_\prob\left[\left.\int_0^1 (\tv - p(Y_{u-}+\delta, u)) \theta^{B,B}_u \,du +\int_0^1 (\tv - p(Y_{u-}+2\delta, u)) \theta^{B,T}_u\,du +\int_0^1 (\tv-p(Y_{u-}, u)) \theta^{B,S}_u \,du\right.\right.\\
 & \hspace{5mm}\left.\left.- \int_0^1 (\tv - p(Y_{u-}-\delta, u)) \theta^{S,S}_u \, du -\int_0^1 (\tv- p(Y_{u-}-2\delta, u)) \theta^{S,T}_u \, du -\int_0^1 (\tv- p(Y_{u-}, u)) \theta^{S,B}_u \,du \right| \tv\right].
\end{split}
\]
This motivates us to define the following value function for the informed trader:
\[
\begin{split}
 & V(\tv, y, t) = \sup_{\theta^{i,j}; \,i\in\{B,S\}, j\in \{B, S, T\}} \\
 &\delta\, \expec_\prob\left[\int_t^1 (\tv - p(Y_{u-}+\delta, u)) \theta^{B,B}_u \, du +  \int_t^1 (\tv - p(Y_{u-}+2\delta, u)) \theta^{B, T}_u + \int_t^1 (\tv- p(Y_{u-}, u)) \theta^{B,S}_u \, du\right.\\
 &\left.\left.- \int_t^1 (\tv - p(Y_{u-}-\delta, u)) \theta^{S,S}_u \,du - \int_t^1 (\tv- p(Y_{u-}-2\delta, u)) \theta^{S,T}_u \, du
 - \int_t^1 (\tv-p(Y_{u-}, u)) \theta^{S,B}_u \, du \right| Y_t = y, \tv\right],
\end{split}
\]
for $\tv\in\{0, 1\}$, $t\in [0,1)$, and $y\in \delta \Integer$. The terminal value of $V$ at $1$ can be defined via the left limit $V(\tv, y, 1) := \lim_{t\uparrow 1} V(\tv, y, t)$. As we will see in Remark \ref{rem: boundary layer} below, $V(\tv, y, 1)$ is not always zero.

Recall that $Y= X+Z$ so that if one defines $Y^B = X^{B,B} + X^{B, T} + Z^B - X^{S, B}$ and $Y^S = X^{S,S} + X^{S, T} + Z^S - X^{B,S}$, then it is easy to see that
$(Y^B_t - \delta \int_0^t (\beta - \theta^{B,T}_s - \theta^{S, B}_s)\,ds - \delta\int_0^t \theta^{B,B}_s \, ds - 2 \delta \int_0^t \theta^{B, T}_s \, ds)$ and
$(Y^S_t - \delta\int_0^t (\beta-\theta^{S,T}_s- \theta^{B, S})\,ds - \delta\int_0^t \theta^{S,S}_s \, ds - 2 \delta \int_0^t \theta^{S, T}_s \, ds)$
are $\F^I$-martingales.  Thus, applying Ito's formula to $V(\tv, Y_t, t)$ yields the following formal HJB equation (the variable $\tv$ is omitted in $V$ for simplicity of notation) in view of the standard dynamic programming arguments:
\begin{equation}\label{eq: HJB V}
\begin{split}
 0=  V_t &+ \pare{V(y+\delta, t) - 2V(y, t) + V(y-\delta, t)} \beta  \\
 &+\sup_{\theta^{B,B}\geq 0} \bra{V(y+\delta, t) - V(y, t) + \pare{\tv- p(y+\delta, t)}\delta} \theta^{B,B} \\
 &+ \sup_{\theta^{B,T}\geq 0} \bra{V(y+2\delta, t) - V(y+\delta, t) + \delta (\tv - p(y+2\delta, t))} \theta^{B, T}\\
 &+ \sup_{\theta^{B,S}\geq 0} \bra{V(y, t) -V(y-\delta, t) + (\tv - p(y,t))\delta}\theta^{B,S} \\
 &+ \sup_{\theta^{S,S} \geq 0} \bra{V(y-\delta, t) - V(y, t) - \pare{\tv- p(y-\delta, t)}\delta} \theta^{S,S} \\
 &+ \sup_{\theta^{S,T}\geq 0} \bra{V(y-2\delta, t) - V(y- \delta, t) - \delta (\tv- p(y-2\delta, t))} \theta^{S, T}\\
 &+ \sup_{\theta^{S,B} \geq 0} \bra{V(y, t) - V(y+\delta, t) - (\tv- p(y,t))\delta} \theta^{S,B}, \quad (y, t)\in \delta \Integer \times [0,1).
\end{split}
\end{equation}
The optimiser $(\theta^{i,j}; i\in \{B,S\} \text{ and } j\in \{B, S, T\})$ in the previous equation is expected to be the $\F^I$-intensities of the insider's optimal strategy $(X^{i,j})$ when the order size is normalised to $1$.

Notice that all maximisations in \eqref{eq: HJB V} are linear in $\theta$. Therefore \eqref{eq: HJB V} reduces to the following system:
\begin{equation}\label{eq: HJB system V}
\begin{split}
 V_t + \pare{V(y+\delta, t) - 2V(y, t) + V(y-\delta, t)} \beta&=0,\\
 V(y+\delta, t) - V(y, t) + (\tv - p(y+\delta, t)) \delta &\leq 0,\\
 V(y-\delta, t) - V(y, t) - (\tv - p(y-\delta, t)) \delta &\leq 0, \quad (y, t) \in \delta \Integer \times [0,1).
\end{split}
\end{equation}
Here the first inequality corresponds to the maximisation in $\theta^{B, j}$; while the second inequality corresponds to the maximisation in $\theta^{S, j}$, $j\in \{B, S, T\}$.
Let's denote the optimisers in \eqref{eq: HJB V} with
$(\theta^{i,j}(y,t); (y,t)\in\delta \Integer \times [0,1))$, $i\in
\{B,S\}$ and $j\in \{B, S, T\}$. Observe that the first inequality in \eqref{eq: HJB system
  V} can be strict only if $\theta^{B,B}(y,t) = \theta^{B,S}(y+\delta,
t)=\theta^{B, T}(y-\delta, t)=0$. Similarly, the second inequality can be strict only if
$\theta^{S,S}(y,t) = \theta^{S,B}(y-\delta, t)=\theta^{S,T}(y+\delta, t)=0$. We will see later
that the optimal $\theta^{B,B}$ and $\theta^{B,S}$ are never $0$ for
the high type insider meanwhile $\theta^{S,S}$ and $\theta^{S,B}$ are
never $0$ for the low type. Therefore the first inequality in
\eqref{eq: HJB system V} is actually an equality when $\tv=1$ and the
second inequality is an equality when $\tv=0$. Economically speaking,
these equalities imply that at every instant of time there is a
non-zero probability that a high type insider will make a buy order by
either contributing to uninformed buy orders or canceling uninformed
sell orders, and the low type insider will do the opposite. Such
actions are certainly reasonable for the insider. Indeed, a high type
insider will reveal her information gradually and keep the market
price strictly less than $1$. Recall that $p$ is a martingale bounded
by $1$, so once it hits $1$, it will be stopped at that
level. Therefore, since there is always a strictly positive difference
between the true price, which is $1$ in this case, and the market price, the insider will always want to take advantage of this discrepancy and buy with positive probability since the asset is undervalued by the market. The situation for the low type is similar.

In view of the previous discussion, let's consider the  following system:
\begin{equation}\label{eq: HJB system H}\tag{HJB-H}
\begin{split}
 &V^H_t + \pare{V^H(y+\delta, t) - 2V^H(y, t) + V^H(y-\delta, t)} \beta =0,\\
 &V^H(y+\delta, t) - V^H(y, t) + (1- p(y+\delta, t)) \delta =0;
\end{split}
\end{equation}
\begin{equation}\label{eq: HJB system L}\tag{HJB-L}
\begin{split}
 &V^L_t + \pare{V^L(y+\delta, t) - 2V^L(y, t) + V^L(y-\delta, t)} \beta =0,\\
 &V^L(y-\delta, t) - V^L(y, t) + p(y-\delta, t) \delta=0,
\end{split}
\end{equation}
for $(y, t) \in \delta \Integer \times [0,1)$. We expect that $V^H(y,
t) = V(1, y, t)$ and $V^L(y, t) = V(0, y, t)$.  The next lemma will
construct solutions to the above system and will be useful in solving
the insider's optimisation problem. However, before the statement and
the proof of this lemma we need to introduce a class of functions
satisfying certain boundary conditions  and differential equations. We will nevertheless denote them with $p$ since, as we shall see later, they will  appear in the equilibrium as pricing rules for the market maker.

To this end, for each $z \in \delta \Integer $, let
\begin{equation}\label{eq: def P}
 P^z(y) := \left\{\begin{array}{ll}0, & y<z\\ 1, & y\geq z\end{array}\right.,
\end{equation}
and define
 \begin{equation}\label{eq: pz}
  p^z(y, t) := \expec_\prob[P^z(Z_1) \,|\, Z_t =y].
 \end{equation}
Observe that $Z/\delta$ is the difference of two independent Poisson processes. The Markov property implies\footnote{The Markov property of $Z$ implies that $\prob(Z_1 = \tilde{z} \,|\, Z_t=y)$ satisfies $p_t + (p(y+\delta, t) - 2p(y, t) + p(y-\delta, t))\beta=0$. Therefore summing up the previous equation for different $z$ induces that $\sum_{\delta \Integer \ni \tilde{z} \geq z} \partial_t \prob(Z_1 = \tilde{z} \,|\, Z_t =y)$ is finite. Hence Fubini's theorem implies that the previous sum is exactly $\partial_t p^z$ and $p^z$ solves \eqref{eq: pz eqn}.} that $p^z$ satisfies
\begin{equation}\label{eq: pz eqn}
 \begin{split}
  &p^z_t + \pare{p^z(y+\delta, t) -2p^z(y, t) + p^z(y-\delta, t)} \beta=0, \quad (y,t)\in \delta \Integer \times [0,1),\\
  &p^z(y,1) = P^z(y),  \hspace{5.8cm} y\in \delta \Integer.
 \end{split}
 \end{equation}

\begin{lem}\label{lem: construction H L} Let $p^z$ be defined by (\ref{eq: pz}) for some fixed $ z \in \delta \Integer $ and define
 \[
  H(y, 1):= \delta \sum_{j=\frac{y}{\delta}}^{\frac{z-\delta}{\delta}} (1- A(\delta j)), \quad L(y, 1):= \delta \sum_{j=\frac{z}{\delta}}^{\frac{y}{\delta}} B(\delta j), \quad y\in \delta \Integer,
 \]
 where $A(y):= P^z(y+\delta)$, $B(y):= P^z(y-\delta)$, and $\sum_{j=m}^n \alpha_j=- \sum_{j=n}^m \alpha_j$ by convention whenever $m>n$. Then, both $H(\cdot, 1)$ and $L(\cdot, 1)$ are nonnegative and the following equivalences hold:
 \[
 H(y, 1) =0 \Longleftrightarrow A(y)=1 \Longleftrightarrow y\geq z- \delta, \quad L(y,1) =0 \Longleftrightarrow B(y)=0 \Longleftrightarrow y<z + \delta.
 \]
 Moreover,
 \begin{eqnarray}
  H(y, t)&:=& H(y, 1) +  \delta \beta\int_t^1 \pare{p^z(y+\delta, u) - p^z(y,u)} du \quad \text{ and } \label{eq: H def}\\
  L(y,t)&:=& L(y,1) +  \delta \beta \int_t^1 \pare{p^z(y, u) -p^z(y-\delta, u)} du \label{eq: L def}
 \end{eqnarray}
 solve \eqref{eq: HJB system H} and \eqref{eq: HJB system L} respectively.
\end{lem}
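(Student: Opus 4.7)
The plan is to verify each claim in turn: first the boundary identities at $t=1$ (nonnegativity and the equivalences), then the algebraic identity (second line) of each HJB system, and finally the PDE (first line). The boundary statements are immediate since $A$ and $B$ are $\{0,1\}$-valued: every summand in $H(y,1)$ and $L(y,1)$ is nonnegative when the index range runs in the natural direction, and the reversed-index cases (occurring when $y\geq z$ for $H$ and when $y<z$ for $L$) give zero termwise, because $A(\delta j)=1$ throughout the reversed range of $H$ and $B(\delta j)=0$ throughout the reversed range of $L$. The equivalences $H(y,1)=0\Leftrightarrow A(y)=1$ and $L(y,1)=0\Leftrightarrow B(y)=0$ follow by the same termwise inspection.

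For the second equation of \eqref{eq: HJB system H}, the key starting point is the one-term identity $H(y+\delta,1)-H(y,1)=-\delta(1-A(y))$, obtained by removing the single summand $j=y/\delta$ (with easy separate verification in the edge cases $y\geq z-\delta$). Combining this with \eqref{eq: H def} yields
\[
H(y+\delta,t)-H(y,t)=-\delta(1-A(y))+\delta\beta\int_t^1\bigl[p^z(y+2\delta,u)-2p^z(y+\delta,u)+p^z(y,u)\bigr]\,du.
\]
By \eqref{eq: pz eqn} the bracketed expression equals $-\beta^{-1}\partial_u p^z(y+\delta,u)$, so the integral telescopes to $\beta^{-1}[p^z(y+\delta,t)-A(y)]$. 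After cancellation, what remains is $-\delta(1-p^z(y+\delta,t))$, which is precisely the desired identity.

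For the PDE in \eqref{eq: HJB system H}, differentiating \eqref{eq: H def} in $t$ gives $H_t(y,t)=-\delta\beta(p^z(y+\delta,t)-p^z(y,t))$. By linearity of \eqref{eq: pz eqn}, the function $q(y,u):=p^z(y+\delta,u)-p^z(y,u)$ is itself a solution of \eqref{eq: pz eqn}. Expanding the discrete Laplacian of $H(\cdot,t)$ via \eqref{eq: H def} shows that the time-integral part is $\delta\beta\int_t^1[q(y+\delta,u)-2q(y,u)+q(y-\delta,u)]\,du$, which by the PDE for $q$ telescopes to $\delta[q(y,t)-q(y,1)]$. The boundary contribution $H(y+\delta,1)-2H(y,1)+H(y-\delta,1)$ equals $\delta(P^z(y+\delta)-P^z(y))=\delta\, q(y,1)$ by the one-term identity above, so this exactly cancels the $-\delta\, q(y,1)$ term, leaving
\[
H(y+\delta,t)-2H(y,t)+H(y-\delta,t)=\delta\, q(y,t)=-\beta^{-1}H_t(y,t),
\]
which is the first line of \eqref{eq: HJB system H}. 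The argument for $L$ and \eqref{eq: HJB system L} is entirely parallel, with $p^z(y,\cdot)-p^z(y-\delta,\cdot)$ playing the role of $q$ and $L(y+\delta,1)-L(y,1)=\delta B(y+\delta)$ the analogue of the one-term identity.

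There is no real analytic obstacle here; the argument is essentially bookkeeping. The only care required is with the reversed-index summation convention in the boundary formulas, where one must verify that the one-term differences $H(y+\delta,1)-H(y,1)$ and $L(y+\delta,1)-L(y,1)$ remain valid in the edge cases when the defining sums are formally reversed. This is ensured by the $\{0,1\}$-valuedness of $A$ and $B$, so the missing or extra summands are automatically zero.
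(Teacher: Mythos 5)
Your proof is correct. The boundary observations, the one-term identity $H(y+\delta,1)-H(y,1)=-\delta(1-A(y))$, and the derivation of the algebraic identity (second line of \eqref{eq: HJB system H}) via the telescoping integral and \eqref{eq: pz eqn} all match the paper's argument. The only place you diverge is in establishing the PDE (first line): you expand the discrete Laplacian $H(y+\delta,t)-2H(y,t)+H(y-\delta,t)$ directly from \eqref{eq: H def}, introducing $q(y,u)=p^z(y+\delta,u)-p^z(y,u)$ and using that it again solves \eqref{eq: pz eqn}, whereas the paper simply subtracts two copies of the already-proved second-line identity, $H(y+\delta,t)-H(y,t)=\delta(p^z(y+\delta,t)-1)$ and $H(y,t)-H(y-\delta,t)=\delta(p^z(y,t)-1)$, to get $\delta(p^z(y+\delta,t)-p^z(y,t))$ immediately. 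Your route is a bit longer but buys nothing extra; both are elementary bookkeeping, and you correctly flag the only point requiring care (the reversed-summation convention in the boundary identities).
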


\begin{proof}
 Statements regarding $H(y,1)$ and $L(y,1)$ directly follow from the definitions. We will next show that $H$ satsifies \eqref{eq: HJB system H}. Analogous statement for $L$ can be proven similarly. First observe that
 \[
 H(y+\delta, 1) - H(y, 1) = -\delta + \delta A(y) = -\delta + \delta P^z(y+\delta).
 \]
 Thus,
 \bea
  H(y+\delta, t) - H(y, t) &=& H(y+\delta, 1) - H(y, 1)+ \delta \beta \int_t^1 \pare{p^z(y+2\delta, u) - 2p^z(y+\delta, u) + p^z(y, u)} du \nn \\
  &=& \delta \pare{p^z(y+\delta, t) -1},  \label{eq: H sec eqn}
 \eea
 where \eqref{eq: pz eqn} is used to obtain the last line. This proves the second equation in \eqref{eq: HJB system H}.

 Next, it follows from the definition of $H$ that
 \[
 H_t(y, t)+ \delta \beta \pare{p^z(y+\delta, t) - p^z(y, t)}=0.
 \]
 However, iterating  \eqref{eq: H sec eqn} yields
 \bean
 H(y+\delta,t)+H(y-\delta,t)-2H(y,t)&=&H(y+\delta,t)- H(y, t)-\left(H(y, t)-H(y-\delta,t)\right)\\
  &=&\delta\pare{p^z(y+\delta, t) -p^z(y,t)},
  \eean
  and, hence, the claim.
\end{proof}
Given a pricing rule, let us describe insider's optimal strategies.
\begin{prop}\label{p: UB value func}
 Suppose that the market maker chooses $p^z$ as a pricing rule, where $z$ is fixed and $p^z$ is as defined in (\ref{eq: pz}). Then, the following holds:
 \begin{enumerate}
  \item[i)] When $\tv=1$,  $(X^{B}, X^{S}; \F^I)$ is an optimal strategy if and only if $Y_1 \geq z-\delta$ and $X^{S,j} = 0$, $j=\{B,S,T\}$.
  \item[ii)] When $\tv=0$, $(X^{B}, X^{S}; \F^I)$ is an optimal strategy if and only if $Y_1 <z+\delta$ and $X^{B,j} =0$, $j=\{B,S,T\}$.
 \end{enumerate}
 When the previous condition holds for $\tv=1$ (resp. $\tv=0$), $v(1, y, t) = H(y,t)$ (resp. $v(0, y, t) = L(y,t)$) for $(y,t) \in \delta \Integer \times [0,1]$.
\end{prop}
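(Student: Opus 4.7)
The proof is a verification argument built on the HJB system from Lemma \ref{lem: construction H L}. I sketch the high-type case $\tv=1$; the case $\tv=0$ is entirely symmetric, using $L$ and \eqref{eq: HJB system L}. The first ingredient is an extra inequality complementing \eqref{eq: HJB system H}: applying \eqref{eq: H sec eqn} with $y$ replaced by $y-\delta$ gives $H(y-\delta, t) - H(y, t) = \delta(1-p^z(y, t))$, so that
\begin{equation*}
H(y-\delta, t) - H(y, t) - \delta\bigl(1 - p^z(y-\delta, t)\bigr) = \delta\bigl(p^z(y-\delta, t) - p^z(y, t)\bigr) \le 0,
\end{equation*}
with strict inequality for $t<1$, since $p^z(\cdot, t)$ is strictly increasing (the distribution of $Z_1 - Z_t$ assigns positive mass to every value in $\delta\Integer$).

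Given an arbitrary admissible $(X^B, X^S; \F^I)$, I would then apply integration by parts to $H(Y_t, t)$ in the filtration $\F^I$, under which the counting processes $Z^B/\delta$ and $Z^S/\delta$ remain Poisson of intensity $\beta$ (as they are independent of $\tv$ and of $\cH$), and the compensator of each $X^{i,j}$ is $\delta\int_0^\cdot \theta^{i,j}_s\,ds$. A case-by-case analysis of which pairs of processes jump simultaneously yields a drift of $H(Y_t, t)$ of the form $H_t + \beta\bigl[H(Y{+}\delta) + H(Y{-}\delta) - 2H(Y)\bigr] + \sum_{i,j} c^{i,j}(Y_t, t)\,\theta^{i,j}_t$, where the bracket vanishes by the first line of \eqref{eq: HJB system H}, and each coefficient $c^{i,j}$ is the increment of $H$ corresponding to the net change in $Y$ produced by that type of order ($\pm\delta$ for isolated insider controls, $\pm 2\delta$ for pairings with same-direction noise, $0$ for cancellations of opposite-direction noise).

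Adding the profit-rate integrand from \eqref{eq: insider problem} and invoking the second line of \eqref{eq: HJB system H} at $y$, $y+\delta$, and $y-\delta$ shows that the coefficients of $\theta^{B,B}, \theta^{B,T}, \theta^{B,S}$ cancel identically, while the preliminary inequality applied at $y$, $y-\delta$, and $y+\delta$ shows that the coefficients of $\theta^{S,S}, \theta^{S,T}, \theta^{S,B}$ are all $\le 0$, strictly so for $t<1$. Integrating over $[0,1]$ and taking conditional expectations given $\tv=1$ and $Y_0=y$ (a routine localization turns the stochastic integrals into true martingales, using boundedness of $p^z$ and the integrability $\expec[X^B_1 + X^S_1\mid\tv] < \infty$) yields
\begin{equation*}
J_{\tv=1}(\theta) \le H(y,0) - \expec\bigl[H(Y_1, 1) \mid \tv=1, Y_0=y\bigr] \le H(y, 0),
\end{equation*}
since $H(\cdot, 1) \ge 0$ by Lemma \ref{lem: construction H L}. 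Equality throughout forces $H(Y_1, 1)=0$ a.s., i.e., $Y_1 \ge z-\delta$ a.s., and $\theta^{S,j}_t = 0$ for Lebesgue-a.e.\ $t\in[0,1)$ with $j\in\{B,S,T\}$; by the absolute continuity of the compensators the latter is equivalent to $X^{S,j}\equiv 0$.

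Conversely, under these two conditions all verification inequalities become equalities, giving $J_{\tv=1}(\theta)=H(y,0)$; the identity $v(1,y,t)=H(y,t)$ then follows once one knows an admissible strategy satisfying both conditions exists, which is precisely what the bridge construction of Section \ref{sec: bridge construction} will supply. The principal obstacle is the combinatorial bookkeeping in the drift decomposition: the six overlapping controls, subject to $\theta^{B,T}+\theta^{S,B}\le\beta$ and $\theta^{S,T}+\theta^{B,S}\le\beta$, must each be correctly attributed to the jumps they induce in $Y$ before the HJB relations can be invoked. The localization argument for the martingale terms is routine but sensitive to the integrability hypotheses in Definition \ref{def: admissible}.
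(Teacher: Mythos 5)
Your proposal is correct and follows essentially the same verification argument as the paper: apply It\^o's formula to $H(Y_\cdot,\cdot)$ in $\F^I$, cancel the $H_t$ and second-difference terms via the first line of \eqref{eq: HJB system H}, use the second line to kill the buy-side coefficients and the "extra inequality" $H(y-\delta,t)-H(y,t)-\delta(1-p^z(y-\delta,t))=\delta(p^z(y-\delta,t)-p^z(y,t))\le 0$ (which the paper derives in-line via monotonicity of $p^z$ rather than as a displayed lemma) to bound the sell-side coefficients, then conclude from $H\ge 0$ that equality holds iff $H(Y_1,1)=0$ and $\theta^{S,j}\equiv 0$. The only technical difference of note: the paper avoids your localization step by observing directly that the integrands $1-p(Y_{u-}\pm\delta,u)$ are bounded in $[0,1]$ and $X^{i,j}-\delta\int_0^\cdot\theta^{i,j}_u\,du$ are genuine $\F^I$-martingales by admissibility, so the stochastic integrals are already true martingales by \cite[Chapter 1, T6]{Bremaud}.
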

\begin{rem}\label{rem: boundary layer}
 Recall that $V(\tv, y, 1):=\lim_{t\uparrow 1} V(\tv, y, t)$. Lemma \ref{lem: construction H L} and Proposition \ref{p: UB value func} combined implies that $V(\tv, y, 1) \geq 0$. It is only zero when $A(y) = 1$ for the high type and $B(y)=0$ for the low type.
\end{rem}

\begin{proof}
  The statements for $\tv=1$ case will be proved. Similar arguments can be applied in order  to prove the statement regarding $\tv=0$. Fix $(y, t) \in \delta \Integer \times [0,1)$. For any admissible trading strategy $(X^{i,j}; i\in\{B,S\})$ and $j\in \{B, S, T\}$ with associated $\cF^I$-intensities $(\delta \theta^{i,j}; i\in \{B, S\} \text{ and } j\in \{B, S, T\})$, applying Ito's formula to $H(Y_\cdot, \cdot)$ and utilizing Lemma \ref{lem: construction H L}, we obtain
   \bean
  && H(Y_1, 1)\\
  &=& H(y, t) + \int_t^1 H_t(Y_{u-}, u) du \\
  && + \int_t^1 \pare{H(Y_{u-}+\delta, u) - H(Y_{u-}, u)} (\beta- \theta^{B, T}_u - \theta^{S,B}_u) \,du + \int_t^1 \pare{H(Y_{u-}+\delta, u) - H(Y_{u-}, u)} \theta^{B,B}_u\, du \\
  &&+ \int_t^1 \pare{H(Y_{u-} + 2\delta, t) - H(Y_{u-}, u)} \theta^{B,T}_u \, du\\
  &&+ \int_t^1 \pare{H(Y_{u-}-\delta, u) - H(Y_{u-}, u)} (\beta- \theta^{S,T} - \theta^{B,S}_u) \,du + \int_t^1 \pare{H(Y_{u-}-\delta, u) -H(Y_{u-}, u)} \theta^{S,S}_u \, du\\
  &&+ \int_t^1 \pare{H(Y_{u-}- 2\delta, u) - H(Y_{u-}, u)} \theta^{S, T}_u\, du + M_1-M_t\nn \\
  &=& H(y, t) \\
  && - \int_t^1 \pare{H(Y_{u-}+\delta, u) - H(Y_{u-}, u)} \theta^{S,B}_u \,du + \int_t^1 \pare{H(Y_{u-}+\delta, u) - H(Y_{u-}, u)} \theta^{B,B}_u\, du \\
  &&+ \int_t^1 \pare{H(Y_{u-} + 2\delta, t) - H(Y_{u-}+\delta, u)} \theta^{B,T}_u \, du\\
  &&- \int_t^1 \pare{H(Y_{u-}-\delta, u) - H(Y_{u-}, u)}  \theta^{B,S}_u \,du + \int_t^1 \pare{H(Y_{u-}-\delta, u) -H(Y_{u-}, u)} \theta^{S,S}_u \, du\\
  &&+ \int_t^1 \pare{H(Y_{u-}- 2\delta, u) - H(Y_{u-}-\delta, u)} \theta^{S, T}_u\, du+ M_1-M_t\nn \\
  &=& H(y, t) + \delta \int_t^1 \pare{p(Y_{u-}+\delta, u)-1} \theta^{B,B}_u\, du + \delta \int_t^1 \pare{1-p(Y_{u-}, u)} \theta^{S,S}_u \, du\\
  &&-\delta \int_t^1 \pare{p(Y_{u-}+\delta, u)-1} \theta^{S,B}_u \,du -\delta \int_t^1 \pare{1-p(Y_{u-}, u)} \theta^{B,S}_u \, du\\
  && - \delta \int_t^1 (1-p(Y_{u-}+ 2\delta, u)) \,\theta^{B,T}_u \, du  + \delta \int_t^1 (1- p(Y_{u-} - \delta, u)) \,\theta^{S, T}_u \, du + M_1-M_t.
 \eean
 Here $M$ contains $\int_0^\cdot (p(Y_{u-}+\delta, u) -1) (dX^{B,B}_u -\delta \theta^{B,B}_u du)$ and similar processes, which are all $\F^I$-martingales due to the bounded integrand and the martingale property of $X^{i,j}- \delta \int_0^\cdot \theta^{i,j}_u\, du$ for $i\in \{B, S\}$ and $j\in \{B,S,T\}$ (see \cite[Chapter 1, T6]{Bremaud}). Thus, on $[\tv=1]$
  \bean
 &&\delta\int_t^1 \left(1 -p(Y_{u-}+\delta,u)\right)\theta^{B,B}_u\,du +\delta \int_t^1 \pare{1-p(Y_{u-}, u)} \theta^{B,S}_u \, du + \delta \int_t^1 (1-p(Y_{u-}+2\delta, u)) \theta^{B, T}_u\, du \\
 &&-\delta\int_t^1 \left(1 -p(Y_{u-}-\delta, u)\right)\theta^{S,S}_u\,du - \delta\int_t^1 \pare{1-p(Y_{u-}, u)} \,\theta^{S, B}_u \, du - \delta \int_t^1 (1-p(Y_{u-}-2\delta, u))\, \theta^{S,T}_u \, du\\
 &=&M_1 -M_t -H(Y_1,1)+H(y,t)\\
 && -\delta\int_t^1 \pare{p(Y_{u-},u)-p(Y_{u-}-\delta,u)}\theta^{S,S}_u\,du - \delta \int_t^1 \pare{p(Y_{u-}+\delta, u) - p(Y_{u-}, u)} \theta^{S,B}_u \, du\\
 && - \delta\int_t^1 (p(Y_{u-}-\delta, u) - p(Y_{u-}-2\delta, u)) \,\theta^{S,T}_u \, du.
 \eean
  Observe that the left side of the above equality is the wealth of the insider. Moreover, since $H\geq 0$ and $p$ is strictly increasing in $y$, the expected wealth, conditioned on $\F^I_t$, is maximised when $H(Y_1,1)=0$ $\prob$-a.s., $\theta^{S,S}$, $\theta^{S,T}$, and $\theta^{S,B}$ are identically zero. However, in view of Lemma \ref{lem: construction H L}, $H(Y_1,1)=0$ if and only if $Y_1 \geq z-\delta$.
\end{proof}
We are now ready to state the conditions for equilibrium.
\begin{thm} \label{t:equilibrium}
$(p, X^B, X^S, \F^I)$ is a Glosten-Milgrom equilibrium if there exists a $y_\delta\in \delta \Integer$ such that
\begin{itemize}
\item[i)] $[Y_1\geq y_\delta] = [\tv=1]$ $\prob$-a.s.;
\item[ii)]$p=p^{y_{\delta}}$ which is defined by (\ref{eq: pz});
\item[iii)] $(X^B, X^S; \F^I)$ is an admissible strategy such that $Y=Z
  +X^B-X^S=Y^B-Y^S$ where $Y^B/\delta$ and $Y^S/\delta$ are
  independent, $\cF^Y$-adapted Poisson processes with  common
  intensity $\beta$, and $X^S \equiv 0$ (resp. $X^B\equiv 0$) on $[\tv=1]$
  (resp. $[\tv=0]$).\end{itemize}
\end{thm}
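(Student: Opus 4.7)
The plan is to verify both defining properties of a Glosten-Milgrom equilibrium from Definition \ref{def: equilibrium}: that $p^{y_\delta}$ is a rational pricing rule, and that the strategy $(X^B, X^S; \cF^I)$ maximises the insider's objective under this pricing rule.

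For rationality I would first check the structural conditions of Definition \ref{def: rational pricing rule}: strict monotonicity of $y \mapsto p^{y_\delta}(y,t)$ follows because $Z_1 - Z_t$ places strictly positive mass on every integer multiple of $\delta$, so
\[
p^{y_\delta}(y+\delta,t) - p^{y_\delta}(y,t) = \prob[Z_1 - Z_t = y_\delta - y - \delta] > 0;
\]
continuous differentiability in $t$ is immediate from \eqref{eq: pz eqn} together with boundedness. The substantive point is the identity $p^{y_\delta}(Y_t,t) = \expec[\tv \mid \cF^Y_t]$, which I would establish by exploiting both remaining hypotheses: condition (iii) tells us that in its own filtration $Y$ has the law of $Z$ and is therefore $\cF^Y$-Markov, while condition (i) forces $\tv = \indic_{[Y_1 \geq y_\delta]}$ $\prob$-a.s. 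Combining the two,
\[
\expec[\tv \mid \cF^Y_t] = \prob[Y_1 \geq y_\delta \mid \cF^Y_t] = \prob[Y_1 \geq y_\delta \mid Y_t] = p^{y_\delta}(Y_t,t),
\]
where the last equality uses the equality in law between $Y$ under $\cF^Y$ and $Z$ together with the definition \eqref{eq: pz}.

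For optimality I would invoke Proposition \ref{p: UB value func} with $z = y_\delta$; admissibility of $(X^B, X^S; \cF^I)$ is already part of hypothesis (iii). On $[\tv = 1]$, condition (iii) gives $X^S \equiv 0$, which forces $X^{S,j} \equiv 0$ for each $j \in \{B,S,T\}$ since these three are nonnegative increasing processes summing to $X^S$; condition (i) gives $Y_1 \geq y_\delta \geq y_\delta - \delta$. Proposition \ref{p: UB value func}(i) therefore delivers optimality on $[\tv = 1]$. The symmetric argument on $[\tv = 0]$ uses $X^B \equiv 0$ and $[\tv = 0] = [Y_1 < y_\delta] \subset [Y_1 < y_\delta + \delta]$, invoking Proposition \ref{p: UB value func}(ii).

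The only delicate step is the conditional expectation identity above, which rests on the distributional property in (iii) that $Y$, after all insider information has been stripped away by conditioning on $\cF^Y$, looks exactly like the pure noise process $Z$; once that is granted, the Markov property of $Z$ closes the calculation, and the rest of the argument is routine bookkeeping on top of Proposition \ref{p: UB value func}. Note that the theorem asserts only sufficiency; the construction of a concrete quadruplet $(p, X^B, X^S, \cF^I)$ actually realising (i)--(iii) is deferred to Sections \ref{sec: bridge construction} and \ref{sec: equilibrium existence}.
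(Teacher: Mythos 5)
Your argument is essentially identical to the paper's own proof: optimality is deduced from Proposition \ref{p: UB value func} with $z=y_\delta$ using conditions (i) and (iii), and rationality of $p^{y_\delta}$ follows from the equality in law between $Y$ and $Z$, the Markov property, and condition (i) giving $\tv=\indic_{[Y_1\geq y_\delta]}$. The only (harmless) addition is your explicit check of the structural conditions of Definition \ref{def: rational pricing rule}, which the paper leaves implicit.
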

\begin{proof}
Given the pricing rule $p=p^{y_\delta}$,
Proposition \ref{p: UB value func} implies that $(X^B, X^S; \F^I)$ is optimal because $[Y_1 \geq y_\delta] = [\tv=1]$ $\prob$-a.s. and $X^S\equiv 0$ (resp. $X^B\equiv 0$) on $[\tv=1]$ (resp. $[\tv=0]$). Thus it remans to show $p^{y_\delta}$ is a rational pricing rule given $(X^B, X^S; \F^I)$. Indeed, since $Y$ and $Z$ have the same distribution, it follows from  (\ref{eq: pz}) and the Markov property of $Y$  that
  $\expec_\prob[\tv|\cF^Y_t]=\prob[Y_1\geq
  y_\delta|\cF^Y_t]=p^{y_\delta}(Y_t,t)$ for $t\in[0,1]$.
\end{proof}

\begin{rem}\label{rem: no top up}
 Theorem \ref{t:equilibrium} iii) necessarily requires that $X^{B,T}\equiv 0$ (resp. $X^{S, T}\equiv 0$) on $[\tv=1]$ (resp. $[\tv=0]$) since it implies that the jumps occur with magnitude $\delta$ only. Recall from the proof of Proposition \ref{p: UB value func} that this is not a requirement for optimality from the point of view of insider. Rather, the insider chooses not to trade at the same time and in the same direction with the noise traders in order to make it possible that there is a rational pricing rule that  the market maker can choose.
\end{rem}
The equilibrium given in the above theorem is another manifestation of
{\em inconspicuous trade theorem} commonly observed in the insider
trading literature (see, e.g., \cite{Kyle}, \cite{Back}, \cite{BP}, etc.). Indeed, when the insider is trading optimally in the above
equilibrium, the distribution of the net order process is the same as
that of the net orders of the noise traders, i.e. the insider is able
to hide her trades among the noise trades. However, the private
information is fully, albeit gradually, revealed to the public since
$\tv \in \cF^Y_1$.
 We will construct an admissible strategy satisfying conditions above and show the existence of Glosten-Milgrom equilibrium in the following
section.
\begin{rem}  Proposition \ref{p: UB value func}  and Theorem
  \ref{t:equilibrium} indicate that `bluffing' strategies selling for the high-type
  and buying for the  low-type are sub-optimal. This is in contrast to the  results in
  \cite{Back-Baruch}, which use numeric computations to suggest such bluffing might be optimal.
\end{rem}

\section{Construction of a point process bridge}\label{sec: bridge construction}
As seen in Theorem \ref{t:equilibrium} we are interested in the
construction of a process $Y= Z +X^B-X^S$ such that, in its natural
filtration, $Y=Y^B-Y^S$ such that $Y^B/\delta$ and $Y^S/\delta$ are independent Poisson processes
with intensity $\beta$. To this end, we will construct explicitly  a
process $Y$ on some $(\Om, \cF, (\cF_t)_{t \in [0,1]}, \bbP)$ such that
\begin{equation}\label{eq: Y decomp}
 Y= Z^B - Z^S + X^B \, \indic_{I} - X^S\, \indic_{I^c},
\end{equation}
where $I\in \F_0$ with specified probability, $X^B$ and $X^S$ are two point processes and $Z/\delta$ is
$\cF$-adapted and is the difference of two independent Poisson
processes with intensity $\beta$. In particular, $I$ is independent of $Z$ since $Z$ has independent increments and $Z_0=0$. The set
$I$ is to be associated with the set $[\tv=1]$. In order to comply
with the conditions of the equilibrium described in the last
section, we will further require $[Y_1 \geq y_{\delta}]=I$ $\prob$-a.s. for a given
suitable $y_\delta$. Since $Y$ is expected to have the same distribution as $Z$, the previous condition necessitates
$\prob(I)=\prob(Z_1\geq y_\delta)$. During the construction of the
probability space and the process $Y$, we will take $\delta=1$ without
loss of generality since all the processes can be scaled by
$\delta$ to construct the process we are after.

In order to construct such a process we first need to determine its intensity. Since $Y$ would behave like $Z$ in its own filtration, we can view, in the sense of equality in distributions, the decomposition in \eqref{eq: Y decomp} as that of $Z$ when its own filtration is initially enlarged with the random variable $\indic_{[Z_1 \geq y_1]}$. Thus, the intensity of $Y$ will be that of $Z$ in this enlarged filtration.

Let $(\bbD([0,1],\Integer), {\F^1}, (\F^1_t)_{t\in[0,1]}, {\prob^1})$ be the canonical space where $\bbD([0,1],\Integer)$ is $\Integer$-valued \cadlag\, functions, $\prob^1$ is a probability measure under which $Z^B$ and $Z^S$ are independent Poisson processes with intensities $\beta$, $(\F^1_t)_{t\in[0,1]}$ is the minimal filtration generated by $Z^B$ and $Z^S$ satisfying the usual conditions, and ${\F^1}=\bigvee_{t \in [0,1]}\F^1_t$. Let's denote with  $(\mathcal{G}^1_t)_{t\in[0,1]}$ the filtration $(\F^1_t)_{t\in[0,1]}$ enlarged with the random variable $\indic_{[Z_1 \geq y_1]}$.

In order to find the ${\mathcal{G}^1}$-intensity of $Z$, we will use a standard enlargement of filtration argument which can be found, e.g., in \cite{Mansuy-Yor}. To this end, let $h: [0,1] \times \Integer \mapsto [0,1]$ be the function defined by
\begin{equation}\label{eq: def h}
 h(z,t):= \bbP^1[{Z}_1 \geq y_1 \,|\, {Z}_t = z].
\end{equation}
Note that $h$ is strictly positive on $[0,1) \times \Integer$. Moreover since $(h( {Z}_t,t))_{t\in [0,1]}$ is an ${\F^1}$-martingale, Ito's formula yields
\begin{equation}\label{eq: eqn h}
 h_t(z,t) + \beta \pare{h(z+1,t) + h(z-1,t) - 2h(z,t)} =0, \quad (t, z) \in [0,1) \times \Integer.
\end{equation}

\begin{lem}\label{lem: G intensity}
 The ${\mathcal{G}^1}$-intensities of $Z^B$ and $Z^S$ at $t\in[0,1)$ are given by
 \begin{align*}
  &\indic_{[{Z}_1 \geq y_1]} \beta \frac{h({Z}_{t-} +1,t)}{h({Z}_{t-},t)} + \indic_{[{Z}_1 < y_1]} \beta \frac{1-h({Z}_{t-}+1,t)}{1-h({Z}_{t-},t)}, \\
  &\indic_{[{Z}_1 \geq y_1]} \beta \frac{h({Z}_{t-} -1,t)}{h({Z}_{t-},t)} + \indic_{[{Z}_1 < y_1]} \beta \frac{1-h({Z}_{t-}-1,t)}{1-h({Z}_{t-},t)},
 \end{align*}
 respectively.
\end{lem}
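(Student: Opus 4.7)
The plan is to derive the $\mathcal{G}^1$-intensities by invoking Jacod's initial enlargement formula applied to the Bernoulli random variable $L:=\indic_{[Z_1\geq y_1]}$. Since $L$ takes only the values $0$ and $1$, its $\F^1$-conditional law is entirely encoded by the two $\F^1$-martingales
\[
p_t(1):=\bbP^1[L=1\mid \F^1_t]=h(Z_t,t),\qquad p_t(0):=1-h(Z_t,t),
\]
both of which are strictly positive on $[0,1)\times\Integer$ by the remark immediately following (\ref{eq: def h}). This strict positivity is precisely what Jacod's absolute-continuity hypothesis reduces to in the two-atom setting, so the classical initial-enlargement theorem (as recalled, e.g., in \cite{Mansuy-Yor}) applies and gives that, for every $\F^1$-local martingale $N$,
\[
N_t-\int_0^t\frac{d\langle N,p_\cdot(L)\rangle^{\F^1}_s}{p_{s-}(L)}
\]
is a $\mathcal{G}^1$-local martingale.

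The first concrete step is to exhibit $h(Z_t,t)$ as a pure-jump $\F^1$-martingale. Applying Ito's formula and invoking the PDE (\ref{eq: eqn h}) to cancel the absolutely continuous piece produces
\[
dh(Z_t,t)=\pare{h(Z_{t-}+1,t)-h(Z_{t-},t)}\,dM^B_t+\pare{h(Z_{t-}-1,t)-h(Z_{t-},t)}\,dM^S_t,
\]
where $M^B_t:=Z^B_t-\beta t$ and $M^S_t:=Z^S_t-\beta t$ are orthogonal $\F^1$-martingales, each with predictable quadratic variation $\beta t$. Reading off the predictable covariations one obtains
\[
\langle M^B,p_\cdot(1)\rangle_t=\beta\int_0^t\pare{h(Z_{s-}+1,s)-h(Z_{s-},s)}\,ds,\qquad \langle M^B,p_\cdot(0)\rangle_t=-\langle M^B,p_\cdot(1)\rangle_t,
\]
and similarly for $M^S$ with $+1$ replaced by $-1$.

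Feeding these brackets into Jacod's formula for $N=M^B$ and restricting to $\{L=1\}$, where $p_{s-}(L)=h(Z_{s-},s)$, yields that the $\mathcal{G}^1$-compensator of $Z^B$ equals
\[
\int_0^t\pare{\beta+\beta\,\frac{h(Z_{s-}+1,s)-h(Z_{s-},s)}{h(Z_{s-},s)}}\,ds=\int_0^t\beta\,\frac{h(Z_{s-}+1,s)}{h(Z_{s-},s)}\,ds;
\]
restricting instead to $\{L=0\}$, where $p_{s-}(L)=1-h(Z_{s-},s)$, a parallel calculation gives $\int_0^t\beta\,\frac{1-h(Z_{s-}+1,s)}{1-h(Z_{s-},s)}\,ds$. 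Since any $\mathcal{G}^1$-predictable process decomposes as the sum of its restrictions to $\{L=0\}$ and $\{L=1\}$, this assembles into the announced intensity for $Z^B$. The argument for $Z^S$ is identical after swapping $M^B$ for $M^S$ and $h(Z_{s-}+1,s)$ for $h(Z_{s-}-1,s)$. The only real difficulty is bookkeeping: one has to verify that the sign of the Jacod correction combines with the sign of the bracket so that $\beta+\beta\tfrac{h(Z_{s-}+1,s)-h(Z_{s-},s)}{h(Z_{s-},s)}$ collapses cleanly to the stated ratio, and to check that both candidate intensities are nonnegative, which follows from $h\in[0,1]$.
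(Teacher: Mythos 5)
Your proof is correct and essentially follows the same route as the paper: both hinge on computing the predictable bracket $\langle M^B,h(Z_\cdot,\cdot)\rangle_t=\beta\int_0^t\bigl(h(Z_{s-}+1,s)-h(Z_{s-},s)\bigr)ds$ (and its $M^S$ analogue) and then dividing by $h(Z_{s-},s)$ or $1-h(Z_{s-},s)$ on the respective atoms. The only difference is presentational: you cite Jacod's initial-enlargement theorem as a black box, whereas the paper re-derives the conclusion from scratch by testing $M^B_t-M^B_s$ against $\indic_E\indic_{[Z_1\geq y_1]}$ for arbitrary $E\in\F^1_s$, which is in effect a self-contained proof of the Jacod formula in this two-atom setting.
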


\begin{proof}
 We will only calculate the intensity for $Z^B$. The intensity of $Z^S$ can be obtained similarly. All expectations are taken under $\bbP^1$ throughout this proof. For $s\leq t< 1$, take an arbitrary $E\in {\F}^1_s$ and denote $M^B_t := Z^B_t - \beta t$. The definition of $h$ and the $\F$-martingale property of $M^B$ imply
 \[
\begin{split}
E\bra{(M^B_t - M^B_s) \indic_E \indic_{[Z_1 \geq y_1]} }&= E\bra{(M^B_t - M^B_s) \indic_E h({Z}_t,t)} \\
&=  E\bra{\indic_E \pare{\langle M^B, h({Z}_{\cdot},\cdot)\rangle_t - \langle M^B, h({Z}_{\cdot},\cdot)\rangle_s}}\\
  &= E\bra{\indic_E \int_s^t \pare{h(Z_{r-}+1,r) - h({Z}_{r-},r)} \beta \,dr}\\
  &= E\bra{\indic_E \int_s^t \indic_{[{Z}_1 \geq y_1]}  \frac{h( {Z}_{r-}+1,r) - h( {Z}_{r-},r)}{h({Z}_{r-},r)} \beta \, dr}.
\end{split}
\]
 Since $\bbP^1({Z}_1 < \delta \,|\,{Z}_t = z) = 1- h(z,t)$, similar computations yield
 \[
 E\bra{(M^B_t - M^B_s) \indic_E \indic_{[{Z}_1 < y_1]} } = E\bra{\indic_E \int_s^t \indic_{[{Z}_1 < y_1]}  \frac{h( {Z}_{r-},r) - h( {Z}_{r-}+1,r)}{1-h({Z}_{r-},r)} \beta \,dr}.
 \]
 These computations imply that
 \[
  M^B - \int_0^\cdot \indic_{[{Z}_1 \geq y_1]}  \frac{h({Z}_{r-}+1,r) - h( {Z}_{r-},r)}{h( {Z}_{r-},r)} \beta \, dr - \int_0^\cdot \indic_{[{Z}_1 < y_1]}  \frac{h( {Z}_{r-},r) - h( {Z}_{r-}+1,r)}{1-h( {Z}_{r-},r)} \beta \,dr
 \]
 defines a ${\mathcal{G}^1}$-martingale. Therefore the ${\mathcal{G}^1}$-intensity of $Z^B$ follows from $Z^B_t = M^B_t + \beta t$.
\end{proof}

In what follows, given $I \in \F_0$ and $h$ as in \eqref{eq: def h} such that $\prob(I) = h(0,0)$, $X^B$ on $I$ and $X^S$ on $I^c$ will be constructed so that $Y$ matches the intensities given in the above lemma. As a result, Proposition \ref{p: Y property} ensures $I=[Y_1 \geq y_1]$ $\prob$-a.s., which is what we are after. We will focus on the construction of $X^B$ on $I$ in what follows. By symmetry, $X^S$ on $I^c$ can be constructed by the same method but applied to $-Z$ and $-y_1$.

Recall that one of the goals of the process $X^B$ on $I$ is to make sure that $Y_1$ ends up at a value larger than or equal to $y_1$. In order to achieve this goal $X^B$ will have to add some jumps in addition to the jumps coming from $Z^B$. However, this by itself won't be enough since $Z^S$ will make $Y$ jump downward. Thus, $X^B$ will also need to cancel some of downwards jumps coming from $Z^S$. Of course, there are many ways in which $X^B$ achieves this goal. However, $Y$ is required to have the same distribution as $Z$. We will see in Proposition \ref{p: Y property} that this distribution requirement will also be satisfied once $Y$ has the correct intensity given by Lemma \ref{lem: G intensity}.

As described above $X^B$ will consist of two components $X^{B,B}$ and $X^{B,S}$, where $X^{B,B}$ complements jumps of $Z^B$ and $X^{B,S}$ cancels some jumps of $Z^S$. Let's denote by $(\tau_i)_{i\geq 1}$ the sequence of jump times for the $Y$ process we wish to construct. These stopping times will be constructed inductively as follows. Given $\tau_{i-1}<1$, $\tau_i$ is the minimum of the following three random times:
\begin{enumerate}
 \item[i)] the next jump of $Z^B$,
 \item[ii)] the next jump of $X^{B,B}$,
 \item[iii)] the next jump of $Z^S$ which is not cancelled by a jump of $X^{B,S}$.
\end{enumerate}
Here $X^{B,B}$ and $X^{B,S}$ are constructed so that $Y^B = Z^B + X^{B,B}$ and $Y^S = Z^S - X^{B,S}$ have the required $\F$-intensities on $I$. To achieve all these aims simultaneously, when the $(i-1)^{th}$ jump of $Y$ happens before time $1$, we will generate random variables $\nu_i$ and another sequence of Bernoulli random variables $(\xi_{j, i})_{j\geq 1}$ to determine the next jump of $Y$. In the context of the informed trader trying to make a decision, construction of $X^B$ corresponds to the following pattern: place a buy order  at time $\nu_i$ unless the next buy order from the uninformed trader arrives before $\nu_i$ and also buy at every sell order of the uninformed trader until $\xi_{j, i}=1$ for the first time.

We will now make this intuitive construction rigorous. In order to
perform the subsequent construction, we must assume that the
filtered probability space $(\Om, \cF, (\cF_t)_{t \in [0,1]},\bbP)$ is
large enough so that  there exist $I\in \F_0$ with $\prob(I) = h(0,0)$ and two independent sequences of iid
$\cF$-measurable random variables $(\eta_i)_{i\geq 1}$ and
$(\zeta_i)_{i\geq 1}$ with uniform distribution on $[0, 1]$, moreover $(\eta_i)_{i\geq 1}$ and
$(\zeta_i)_{i\geq 1}$ are independent of both $Z$ and $I$. These requirements can be easily satisfied by extending $\F_0$ and $\F$ if necessary. The sequences $(\eta_i)_{i\geq 1}$ and
$(\zeta_i)_{i\geq 1}$ will be used to construct $\nu_i$ and $(\xi_{j,i})_{j\geq 1}$ in the last paragraph.
As for the filtration $(\cF_t)_{t \in [0,1]}$, we require that $Z/\delta$, as the difference of two independent Poisson
processes with intensity $\beta$, is adapted to $(\cF_t)_{t \in [0,1]}$. Since $Z$ has independent increments and $Z_0=0$, $Z$ is independent of $I$.  We will make one more assumption on the filtration later
during the construction.

Denote by $(\sigma^+_i)_{i\geq 1}$ and $(\sigma^-_j)_{j\geq 1}$ jump times of $Z^B$ and $Z^S$, respectively. We set $\sigma^\pm_i =\infty$ when $\sigma^\pm_i>1$, since we are only interested in processes before time 1. In what follows, we will inductively define two sequences of $[0,1] \cup \{\infty\}$-valued random variables $(\tau^+_i)_{i\geq 1}$ and $(\tau^-_i)_{i\geq 1}$ on $I$. $\tau^+_{i+1}$ (resp. $\tau^-_{i+1}$) will denote the first potential upward (resp. downward) jump of $Y$ after time $\tau_i$ starting with $\tau_0=0$. The process $Y$ on $I$ thus jumps at each $\tau_i := \tau^+_i \wedge \tau^-_i$. In particular, when $\tau^+_i< \tau^-_i$, $\Delta Y_{\tau_i} = \Delta Y^B_{\tau_i} =1$; when $\tau^-_i < \tau^+_i$, $\Delta Y_{\tau_i} = -\Delta Y^S_{\tau_i} = -1$.

Let's start with the construction until the first jump of $Y$. Recall that, in view of Lemma \ref{lem: G intensity}, we want to construct $Y^B$ (resp. $Y^S$) so that its intensity until its first jump is given by
\[
\beta\frac{h(1,t)}{h(0,t)} \quad \left(\mbox{resp. } \beta\frac{h(-1,t)}{h(0,t)}\right).
 \]
Hence $\tau_1$ is constructed to match this intensity.

To define $\tau^+_1$, set
\[
 f_1(t) := 1- \exp\pare{\beta \int_0^t \frac{h(0, u) - h(1, u)}{h(0, u)} du}, \quad t\in [0,1).
\]
Since $z\mapsto h(z, t)$ is strictly increasing, $f_1$ is strictly increasing. We consider the inverse function $f^{-1}_1(y):= \inf\{t\in [0,1) : f(t) > y\}$, where the value is $\infty$ if the indicated set is empty. Now define
\[\nu_1:= f_1^{-1}(\eta_1) \quad \text{ and } \quad \quad \tau^+_1 := \nu_1 \wedge \sigma^+_1 \quad  \text{ on } I.\]
Then $\tau^+_1$ is potentially the first jump time of $Y^B$. It follows from the definition of $\nu_1$ that $\prob(\tau^+_1 <1)>0$. Such $\tau^+_1$ is constructed to match the intensity of $Y^B$ before the first jump of $Y$.
On the other hand, in order to define $\tau^-_1$, consider
\be \label{e:xi}
\xi_{j,1}:= \indic_{\left[\zeta_j \leq \frac{h(-1, \sigma^-_j)}{h(0, \sigma^-_j)}, \, \sigma^-_j<1\right]} + \indic_{[\sigma^-_j\geq 1]} \quad \text{ for } j\geq 1.
\ee
This indicator random variable determines whether the $j^{th}$ jump of $Z^S$ will be cancelled by an opposite jump of $X^{B,S}$.
When $\xi_{j,1}=0$, which only happens when the $j^{th}$ jump of $Z^S$ happens before $1$, this jump of $Z^S$ will be cancelled by a jump of $X^{B,S}$. Such cancelation is performed at a rate $h(-1, \sigma^-_j)/ h(0, \sigma^-_j)$ so as to match the intensity of $Y^S$ before the first jump of $Y$.
Therefore,   $\tau^-_1$,  which is potentially the first negative
jump, is the first jump time $\sigma^-_j$ of $Z^S$ which is {\em not} cancelled. That is,
\[
\tau^-_1:=\min\{\sigma^-_j:\xi_{j,1}=1\}.
\]

Consequently, we define the first jump time of $Y$ on $I$ as
\[
 \tau_1 := \tau^+_1 \wedge \tau^-_1.
\]
This construction yields $\prob(\tau_1 <1)>0$.
On $[t\leq \tau_1, I]$ with $t\leq 1$, we define $X^{B,B}$ and $X^{B,S}$ as
\[
 X^{B,B}_t := \indic_{[\nu_1 < \sigma^+_1]} \indic_{[\tau^+_1 \leq t]} \quad \text{ and } \quad X^{B,S}_t := \sum_{j=1}^\infty (1-\xi_{j,1}) \,\indic_{[\sigma^-_j \leq t]}.
\]

Now suppose that $\tau_{i-1}$ with $\prob(\tau_{i-1}<1)>0$ and $Y_t$ for $t\leq \tau_{i-1}\wedge 1$ have been defined. We will define  in this paragraph $\tau_i$ and $Y_t$ for $t\in (\tau_{i-1}\wedge 1, \tau_i \wedge 1]$. To this end, when $\tau_{i-1}<1$, consider the random function
\[
 f_i(t):= 1-\exp\pare{\lambda\int_{\tau_{i-1}}^t \frac{h(Y_{u-}, u) - h(Y_{u-}+1, u)}{h(Y_{u-}, u)} du}, \quad t\in [\tau_{i-1}, 1).
\]
Since $f_i$ is strictly increasing, the inverse function $f_i^{-1}(y):= \inf\{t\in [\tau_{i-1}, 1) : f(t)>y\}$ is well-defined. When $\tau_{i-1}\geq 1$, set $f_i^{-1}(y)=\infty$.
Now define
\[
 \nu_i := f_i^{-1}(\eta_i) \quad \text{ and } \quad \tau^+_i := \nu_i \wedge \sigma^+_{Z^B_{\tau_{i-1}}+1} \quad \text{ on } I.
\]
To ease notation, we denote $\wt{\sigma}^+_i := \sigma^+_{Z^B_{\tau_{i-1}}+1}$, where $Z^B_{\tau_{i-1}}$ counts the number of $Z^B$ jumps until $\tau_{i-1}$. Hence $\wt{\sigma}^+_i$ indicates which jumps of $Z^B$ could be the next jump of $Y^B$ after $\tau_{i-1}$. Similarly, define
\[
 \xi_{j,i} := \indic_{\left[\zeta_j \leq \frac{h(Y_{\tau_{i-1}}-1, \sigma^-_j)}{h(Y_{\tau_{i-1}}, \sigma^-_j)}, \, \tau_{i-1}\leq \sigma^-_j<1\right]} + \indic_{[\sigma^-_j \geq 1]},
\]
and  set
\[
\tau^-_i:=\min\{\sigma^-_j > \tau_{i-1}: \xi_{j,i} =1\}.
\]
The $i$-th jump of $Y$ on $I$ is then defined as
\[
 \tau_i := \tau^+_i \wedge \tau^-_i.
\]
Since $\prob(\tau_{i-1}<1)>0$, the above construction yields $\prob(\tau_i <1)>0$.
The increment of $X^{B,B}$ and $X^{B,S}$ on $(\tau_{i-1}\wedge 1, \tau_i \wedge 1]$ are defined as
\[
 X^{B,B}_t - X^{B, B}_{\tau_{i-1}} = \indic_{[\wt{\sigma}^+_i> \nu_i]} \indic_{[\tau^+_i \leq t]} \quad \text{ and } \quad X^{B,S}_t - X^{B,S}_{\tau_{i-1}} = \sum_{j=1}^\infty (1-\xi_{j,i})\, \indic_{[\tau_{i-1} \leq \sigma^-_j \leq t]},
\]
for $t\in (\tau_{i-1}\wedge 1, \tau_i \wedge 1]$.

This completes the construction of $X^B$ since $X^B=X^{B,B}+
X^{B,S}$ and we thus obtain the decomposition \eqref{eq: Y decomp} on $I$ for $t \in [0,1 \wedge \lim_{i \rar \infty}\tau_i]$.  As mentioned earlier, the construction on $I^c$ can be performed analogously.

\begin{rem} \label{r:yfinite}
A natural question on whether $\tau:=\lim_{i \rar \infty}\tau_i\geq 1$ or not arises at this point. Observe that since $Z^B$ and $Z^S$ are finite processes, $\prob(\tau<1, I)>0$ implies that there are infinitely many jumps in $X^{B,B}$ so that $\lim_{i \rar \infty}Y^B_{\tau_i}=\infty$, in which case we define $Y^B=\infty$ after $\tau$. A similar explosion on $I^c$ will result in $Y$ becoming $-\infty$. However, we will see in Proposition  \ref{p: Y property} that $Y$ is $\prob$-a.s. a finite process and, thus, $\tau\geq 1$, $\prob$-a.s..
\end{rem}

In order to be able to perform the construction above on $(\Om,\cF,
(\cF_t)_{t\in[0,1]}, \bbP)$, in addition to the assumptions already
imposed on the filtration, we add one more assumption that
$(\F_t)_{t\in[0,1]}$ is right continuous and complete
such that $X^B$ and $X^S$ are $\cF$-adapted and $(\tau_i^+)_{i \geq 1}, (\tau_i^-)_{i \geq 1}$ and $
(\nu_i)_{i\geq 1}$ are $\F$-stopping times. This completes our
assumptions on $(\Omega, \F, (\F_t)_{t\in[0,1]}, \prob)$.
We now return to verify that the process $Y$ just constructed satisfies
\begin{itemize}
\item[i)] $[Y_1\geq y_1]=I$, $\prob$-a.s., and
\item[ii)] In its own filtration $Y=Y^B-Y^S$ where $Y^B$ and $Y^S$ are $\cF^Y$-adapted independent Poisson processes with intensity $\beta$.
\end{itemize}
We first establish that the $\F$-intensity of $Y$ is of the same form as the $\cG^1$-intensity of $Z$ computed in Lemma \ref{lem: G intensity}.

\begin{lem}\label{lem: F intensity}
 The $\F$-intensities of $Y^B$ and $Y^S$ at $t\in[0,1)$ are given by
 \[
  \indic_{I} \beta \frac{h(Y_{t-} +1, t)}{h(Y_{t-}, t)} + \indic_{I^c} \beta \frac{1-h(Y_{t-}+1, t)}{1-h(Y_{t-}, t)} \quad \text{ and } \quad
  \indic_{I} \beta \frac{h(Y_{t-} -1, t)}{h(Y_{t-}, t)} + \indic_{I^c} \beta \frac{1-h(Y_{t-}-1, t)}{1-h(Y_{t-}, t)}.
 \]
\end{lem}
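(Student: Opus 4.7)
The plan is to compute the compensators of $Y^B$ and $Y^S$ on each stochastic interval $(\tau_{i-1}\wedge 1,\tau_i\wedge 1]$ separately and then patch them together via the martingale characterisation of intensity. Since the construction on $I^c$ is obtained by applying the same recipe to $-Z$ and $-y_1$, it suffices by symmetry to verify the formulas on $I$. The crucial simplification is that on $I$, for $u\in(\tau_{i-1},\tau_i]$ we have $Y_{u-}=Y_{\tau_{i-1}}\in\F_{\tau_{i-1}}$, so the quantities $h(Y_{u-}\pm 1,u)$ and $h(Y_{u-},u)$ appearing in the definitions of $f_i$ and $\xi_{j,i}$ become, conditionally on $\F_{\tau_{i-1}}$, deterministic functions of $u$.

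For the positive jumps, I would work on $I\cap\{\tau_{i-1}<1\}$ and condition on $\F_{\tau_{i-1}}$. Since $\eta_i$ is uniform on $[0,1]$ and independent of $\F_{\tau_{i-1}}$, we get $\bbP(\nu_i>t\,|\,\F_{\tau_{i-1}})=1-f_i(t)$, so $\nu_i$ has conditional hazard $\beta\pare{h(Y_{\tau_{i-1}}+1,u)-h(Y_{\tau_{i-1}},u)}/h(Y_{\tau_{i-1}},u)$. By the strong Markov property of $Z^B$, $\wt{\sigma}^+_i$ is conditionally exponential with hazard $\beta$; and since $\eta_i$ is independent of $Z$, $\nu_i$ and $\wt{\sigma}^+_i$ are conditionally independent given $\F_{\tau_{i-1}}$. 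The two hazards add to give $\tau_i^+=\nu_i\wedge\wt{\sigma}^+_i$ conditional hazard $\beta\,h(Y_{\tau_{i-1}}+1,u)/h(Y_{\tau_{i-1}},u)$, which by the first paragraph is precisely the claimed intensity of $Y^B$ on this interval.

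For the negative jumps, the sequence $(\sigma^-_j)$ after $\tau_{i-1}$ is, again by the strong Markov property, a rate-$\beta$ Poisson process independent of $\F_{\tau_{i-1}}$ and of $(\zeta_j)$. The Bernoulli thinning $\xi_{j,i}$ retains each $\sigma^-_j$ with probability $h(Y_{\tau_{i-1}}-1,\sigma^-_j)/h(Y_{\tau_{i-1}},\sigma^-_j)$ independently across $j$, so the retained times form an inhomogeneous Poisson process of rate $\beta\,h(Y_{\tau_{i-1}}-1,u)/h(Y_{\tau_{i-1}},u)$ with first atom $\tau_i^-$. Since $\tau_i^+$ and $\tau_i^-$ depend on disjoint independent sources of randomness they are conditionally independent given $\F_{\tau_{i-1}}$, so both claimed intensities on $(\tau_{i-1},\tau_i\wedge 1]$ hold simultaneously. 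To assemble the global $\F$-compensators on $[0,1)$, I would verify the martingale characterisation $\bbE\bra{\int_0^1 H_s\,\indic_I\,dY^B_s}=\bbE\bra{\int_0^1 H_s\,\indic_I\,\beta\,h(Y_{s-}+1,s)/h(Y_{s-},s)\,ds}$ for bounded $\F$-predictable $H$ by decomposing $\int_0^1=\sum_i\int_{\tau_{i-1}\wedge 1}^{\tau_i\wedge 1}$ and applying the tower property at each $\tau_{i-1}$, with the analogous computation on $I^c$. The main technical obstacle is non-explosion: that $\tau:=\lim_i\tau_i\geq 1$ almost surely, so that the telescoping sum exhausts $[0,1]$. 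This is precisely the point flagged in Remark \ref{r:yfinite} and supplied by the subsequent Proposition \ref{p: Y property}; on this full-measure event, all the above computations are unconditional.
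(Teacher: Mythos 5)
Your proposal is correct and follows essentially the same route as the paper: condition on $\F_{\tau_{i-1}}$ on each inter-jump interval, use the explicit law of $\nu_i$ coming from $f_i$, the strong Markov property and independence of $Z^B$, $Z^S$, $(\eta_i)$, $(\zeta_j)$, and the Bernoulli thinning of $Z^S$ to identify the conditional hazards. The paper packages the $Y^B$ calculation via $\prob(\tau^+_i>t\mid\F_{\tau_{i-1}})$ and the $Y^S$ calculation as a marked point process, both invoking \cite[Prop.~3.1]{Jacod}, whereas you phrase the same computations as adding hazards of conditionally independent competing risks and as Poisson thinning, and you make the telescoping over intervals (and the deferral of non-explosion to Remark~\ref{r:yfinite} and Proposition~\ref{p: Y property}) explicit where the paper leaves it implicit.
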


\begin{proof}
 We will calculate the $\F$-intensities of $Y^B$  and $Y^S$ on $I$. Their intensities on $I^c$ can be similarly verified. First, observe that the construction of $\nu_i$ implies that on $I$ and $[\tau_{i-1} <1]$
 \begin{equation}\label{e:pnu}
 \begin{split}
 & \prob\pare{\nu_i > t\vee \tau_{i-1} \,|\, \F_{\tau_{i-1}}} \\
 &\quad = \prob\pare{\eta_i > f_i(t \vee \tau_{i-1})\,|\, \F_{\tau_{i-1}}} \\
 &\quad = \exp\pare{\beta \int_{\tau_{i-1}}^{t\vee \tau_{i-1}} \frac{h(Y_{u-}, u) - h(Y_{u-}+1, u)}{h(Y_{u-}, u)} du}, \quad \text{ for } t\in[0,1).
 \end{split}
 \end{equation}
 We will make repeated use of (\ref{e:pnu}) in order to obtain the $\F$-intensity of $Y^B$ on $I$. To this end, note that $[\tau^+_i> t\geq \tau_{i-1}, I] = [\wt{\sigma}^+_i > t, \nu_i> t, t\geq \tau_{i-1}, I]$. Therefore we have on $[t\geq \tau_{i-1}, I]$ that
 \be  \label{eq: dist tau_i}
 \begin{split}
  \prob(\tau^+_i > t\,|\, \F_{\tau_{i-1}})&= \prob(\wt{\sigma}^+_i > t \,|\, \F_{\tau_{i-1}}) \prob(\nu_i > t \,|\, \F_{\tau_{i-1}}) \\
  &= \prob(Z_t= Z_{\tau_{i-1}} \,|\, \F_{\tau_{i-1}}) \exp\pare{\beta \int_{\tau_{i-1}}^{t} \frac{h(Y_{u-}, u) - h(Y_{u-}+1, u)}{h(Y_{u-}, u)} du}\\
  &= \exp(-\beta (t-\tau_{i-1}))\exp\pare{\beta \int_{\tau_{i-1}}^{t} \frac{h(Y_{u-}, u) - h(Y_{u-}+1, u)}{h(Y_{u-}, u)} du},
 \end{split}
 \ee
where the first line is due to the independence of $Z^B$ and $\nu_i$ and the last line follows from the strong Markov property of $Z^B$ and the fact that $\tau_{i-1}$ is an $\cF$-stopping time.

It is well-known (see, e.g. Proposition 3.1 in \cite{Jacod}) that the  $\F$-intensity of $Y^B$ on $I$ is given by
 \[
 \frac{\prob(\tau^+_i \in dt \,|\, \F_{\tau_{i-1}})}{\prob(\tau^+_i>t \,|\, \F_{\tau_{i-1}})dt}, \quad t\in(\tau_{i-1}, \tau_i].
 \]
 Utilising \eqref{eq: eqn h} and \eqref{eq: dist tau_i}, it follows from direct calculations and the observation that $Y_{t-}$ is constant in $(\tau_{i-1}, \tau_i]$ that the above intensity is indeed
 \[
  \beta \frac{h(Y_{t-}+1, t)}{h(Y_{t-}, t)} \quad \text{ on } I \text{ for } t\in (\tau_{i-1}, \tau_i].
 \]

 To calculate the intensity of $Y^S$ on $I$ and in the time interval
 $(\tau_{i-1}, \tau_i]$, we will treat the evolution of $Y^S$ as that
 of a marked point process with points $(\sigma^-_j \vee
 \tau_{i-1})_{j\geq 1}$ and marks $(\xi_{j,i})_{j\geq 1}$. Let
 $\wt{\sigma}^-_i:=\sigma^-_{Z^S_{\tau_{i-1}}+1} $ and $\wt{\xi}_i$ be the associated mark. Define $G_i(dt,
 1) = \prob\left(\wt{\sigma}^-_i \in dt, \wt{\xi}_i = 1 \,|\, \F_{\tau_{i-1}}\right)$ and $H_i(dt) = \prob(\wt{\sigma}^-_i\in dt \,|\, \F_{\tau_{i-1}})$. It then follows from Proposition 3.1 in \cite{Jacod} that the intensity of $Y^S$ at $t\in (\tau_{i-1}, \tau_i]$ is
 \[
 \begin{split}
  \frac{G_i(dt, 1)}{H_i([t, \infty]) dt} & =\frac{E_\prob\bra{\indic_{[\wt{\sigma}^-_i \in dt]}\prob(\wt{\xi}_i=1
      \,|\, \F_{\wt{\sigma}^-_i })\,\big|\, \F_{\tau_{i-1}}}}{\prob(\wt{\sigma}^-_i > t \,|\, \F_{\tau_{i-1}}) dt}\\
  &= \frac{h(Y_{t-} -1, t)}{h(Y_{t-}, t)} \frac{\prob(\wt{\sigma}^-_i\in dt \,|\, \F_{\tau_{i-1}})}{\prob(\wt{\sigma}^-_i > t\,|\, \F_{\tau_{i-1}}) dt} \\
  &= \frac{h(Y_{t-} -1, t)}{h(Y_{t-}, t)} \beta,
 \end{split}
 \]
due to the strong Markov property of $Z^S$. This verifies the intensity of $Y^S$ on $I$.
\end{proof}
We are now ready to prove that our construction as desired.
\begin{prop}\label{p: Y property}
 The process $(Y_t; t\in[0,1])$ as constructed above satisfies the following properties:
 \begin{enumerate}
  \item[i)] $[Y_1 \geq y_1] = I$, $\prob$-a.s.;
  \item[ii)] $Y^B$ and $Y^S$ are independent Poisson processes with intensity $\beta$ with
    respect to the natural filtration $(\F^Y_t)_{t\in[0,1]}$ of $Y$. In particular, $Y$ is finite $\prob$-a.s. over $[0,1]$.
  \item[iii)] $\expec[X^B_1]$ and $\expec[X^S_1]$ are finite. Hence the constructed strategy $(X^B, X^S; \F^I)$ is admissible.
 \end{enumerate}
\end{prop}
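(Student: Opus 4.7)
The strategy is to establish (ii) first by showing that the joint law of $(Y^B, Y^S)$ under $\bbP$ coincides with the law of $(Z^B, Z^S)$ under $\bbP^1$, and then to derive (i) and (iii) as corollaries. The key tool is the uniqueness of the law of a multivariate point process prescribed by its predictable compensator.

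For (ii), Lemma \ref{lem: F intensity} shows that on $I$ the $\F$-intensities of $Y^B$ and $Y^S$ reduce to the $\F^Y$-adapted functions $\beta h(Y_{t-}+1,t)/h(Y_{t-},t)$ and $\beta h(Y_{t-}-1,t)/h(Y_{t-},t)$, which are identical in functional form to the $\G^1$-intensities of $(Z^B, Z^S)$ on $[Z_1 \geq y_1]$ under $\bbP^1$ recorded in Lemma \ref{lem: G intensity}. Since $I \in \F_0$, conditioning on $I$ preserves the $\F$-intensity under $\bbP(\cdot \such I)$, and as these intensities are already $\F^Y$-predictable, iterated projection identifies them with the intensities in the natural filtration $\F^Y$. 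Moreover, the construction ensures that $Y^B$ and $Y^S$ have no common jumps $\bbP$-a.s. Invoking the standard uniqueness theorem for multivariate point processes characterised by their compensator (see, e.g., \cite{Jacod}), the conditional law of $(Y^B, Y^S)$ given $I$ under $\bbP$ coincides with the conditional law of $(Z^B, Z^S)$ given $[Z_1 \geq y_1]$ under $\bbP^1$. A symmetric argument, replacing $h$ by $1-h$ in the intensity formulas, yields the corresponding identity on $I^c$. Combining these with $\bbP(I) = h(0,0) = \bbP^1(Z_1 \geq y_1)$ shows that $(Y^B, Y^S)$ under $\bbP$ has the same unconditional law as $(Z^B, Z^S)$ under $\bbP^1$, which proves (ii). As a byproduct, $Y$ inherits the almost sure finiteness of $Z$ on $[0,1]$, so $\tau := \lim_i \tau_i \geq 1$ $\bbP$-a.s., answering the question in Remark \ref{r:yfinite}.

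Statement (i) is then immediate from the conditional-law identity: $\bbP(Y_1 \geq y_1 \such I) = \bbP^1(Z_1 \geq y_1 \such Z_1 \geq y_1) = 1$ and symmetrically $\bbP(Y_1 < y_1 \such I^c) = 1$, so $[Y_1 \geq y_1] = I$ $\bbP$-a.s. For (iii), $X^B$ vanishes on $I^c$, while on $I$ we have $X^B = X^{B,B} + X^{B,S}$ with $X^{B,B}_1 \leq Y^B_1$ (since $Y^B = Z^B + X^{B,B}$ and $Z^B \geq 0$) and $X^{B,S}_1 \leq Z^S_1$ (since $X^{B,S}$ only cancels jumps of $Z^S$); hence $\bbE[X^B_1] \leq \bbE[Y^B_1] + \bbE[Z^S_1] = 2\beta < \infty$, and the bound for $\bbE[X^S_1]$ is symmetric. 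Together with the remaining conditions of Definition \ref{def: admissible}, which were built into the construction, this establishes admissibility of $(X^B, X^S; \F^I)$. The main obstacle in this plan is the rigorous invocation of the compensator-uniqueness theorem: one must verify that conditioning on $I$ produces $\F^Y$-adapted intensities matching those in Lemma \ref{lem: G intensity} and that $Y^B, Y^S$ have no simultaneous jumps almost surely, so that the theorem applies verbatim.
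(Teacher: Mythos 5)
Your proposal is correct but follows a genuinely different route from the paper. You argue via the uniqueness of law for a multivariate point process given its $\F^Y$-compensator (Jacod-type theorem): condition on $I$, observe that the $\F$-intensities from Lemma~\ref{lem: F intensity} collapse to $\F^Y$-predictable expressions that coincide in functional form with the $\F^1$-intensities of $(Z^B, Z^S)$ under $\bbP^1(\cdot \such Z_1 \geq y_1)$, match the two compensator structures, and conclude law equality, from which (i), (ii), and non-explosion drop out at once. The paper instead works concretely: it introduces the positive $\F$-local martingale $\ell_t := \indic_I \, h(0,0)/h(Y_t,t) + \indic_{I^c}\,(1-h(0,0))/(1-h(Y_t,t))$, localizes with stopping times $\eta_n$ to get a genuine martingale, uses Doob's supermartingale convergence to show $h(Y_{1-},1)>0$ on $I$ (giving (i) directly and taming the possible intensity blow-up near $t=1$), and then identifies $\prob(I \such \F^Y_t) = h(Y_t,t)$ via the Girsanov change of measure $d\qprob^i/d\prob = \ell_{\tau_i \wedge t}$ together with Bayes' formula, from which the $\F^Y$-Poisson structure of $(Y^B, Y^S)$ follows by projection.

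What each approach buys: yours is shorter conceptually and makes the ``bridge as a conditioned process'' idea explicit. However, you correctly flag the delicate point yourself, and it deserves emphasis: because the intensity $\beta\, h(Y_{t-}+1,t)/h(Y_{t-},t)$ is unbounded as $t \uparrow 1$ (precisely when $Y_{t-}$ lies strictly below $y_1$), the constructed process is \emph{a priori} only defined up to the candidate explosion time $\tau$ of Remark~\ref{r:yfinite}, and one must invoke the compensator-uniqueness theorem on a canonical space that admits explosive paths, so that equality of the compensator forms up to $\tau \wedge 1$ forces equality in law \emph{including} the explosion behaviour. That version of the theorem is available, but it is not the off-the-shelf statement for non-explosive point processes, and one should not present the finiteness of $Y$ merely as a ``byproduct'' without spelling out why the uniqueness theorem applies before non-explosion is known. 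The paper's $\ell$-martingale argument sidesteps this circularity entirely: the supermartingale convergence of $\ell$ yields non-explosion and the bridge property in one stroke, at the cost of a more computational proof. Your derivations of (i) from the conditional-law identity and of (iii) from $X^{B,B}_1 \leq Y^B_1$ and $X^{B,S}_1 \leq Z^S_1$ match the paper exactly.
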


\begin{proof}
 To verify that $Y$ satisfies the desired properties, let us introduce an auxiliary process $(\ell_t)_{t\in[0,1)}$ via
\[
 \ell_t := \indic_I \frac{h(0,0)}{h(Y_t, t)} + \indic_{I^c} \frac{1-h(0,0)}{1-h(Y_t, t)}, \quad t\in [0,1).
\]
The construction of $Y^S$ on $I$ (resp. $Y^B$ on $I^c$) implies that
there are only a finite number of jumps before a fixed time
$t<1$. Therefore $Y_t > -\infty$ on $I$ (resp. $Y_t<\infty$ on $I^c$)
for $t\in [0,1)$, which implies $h(Y_t, t)>0$ on $I$ (resp. $h(Y_t,
t)< 1$ on $I^c$) for $t\in [0,1)$. As a result, $(\ell_t)_{t\in [0,1)}$ is a well-defined positive process with $\ell_0=1$. To prove the first statement, we first show that $\ell$ is a positive $\F$-local martingale on $[0, 1)$. To this end, Ito's formula yields that
 \[
 \begin{split}
  d\ell_t =\indic_I \ell_{t-} &\bra{\frac{h(Y_{t-}, t)- h(Y_{t-}+1, t)}{h(Y_{t-}+1, t)} dM^{B}_t + \frac{h(Y_{t-}, t) - h(Y_{t-}-1, t)}{h(Y_{t-}-1, t)} dM^{S}_t}\\
    +\indic_{I^c} \ell_{t-} &\bra{\frac{h(Y_{t-}+1, t)- h(Y_{t-}, t)}{1-h(Y_{t-}+1, t)} dM^{B, c}_t + \frac{h(Y_{t-}-1, t) - h(Y_{t-}, t)}{1-h(Y_{t-}-1, t)} dM^{S,c}_t}, \quad t\in[0,1).
 \end{split}
 \]
 Here
 \begin{align*}
   &M^B = \indic_I  Y^B - \indic_I \beta \int_0^\cdot \frac{h(Y_{u-} +1, u)}{h(Y_{u-}, u)} du, &M^S = \indic_I  Y^S - \indic_I \beta \int_0^\cdot \frac{h(Y_{u-}-1, u)}{h(Y_{u-}, u)} du,\\
   &M^{B, c} =\indic_{I^c}  Y^B - \indic_{I^c} \beta \int_0^\cdot \frac{1-h(Y_{u-} +1, u)}{1-h(Y_{u-}, u)} du, &M^{S, c} =\indic_{I^c}  Y^S - \indic_{I^c} \beta \int_0^\cdot \frac{1- h(Y_{u-} -1, u)}{1-h(Y_{u-}, u)} du
  \end{align*}
  are all $\F$-local martingales. Define $\zeta^+_n = \inf\{t\in[0,1]: Y_t = n\}$ and $\zeta^-_n = \inf\{t\in[0,1] : Y_t = -n\}$. Consider the sequence of stopping times $(\eta_n)_{n\geq 1}$, where
  \[
   \eta_n := \pare{\indic_I \zeta^-_n + \indic_{I^c} \zeta^+_n} \wedge (1-1/n).
  \]
  It follows from the definition of $h$  that $h(Y_t, t)$ on $I$ (resp. $1-h(Y_t,t)$ on $I^c$) is bounded away from zero uniformly in $t\in[0, \eta_n]$. This implies that $\ell^{\eta_n}$ is bounded, hence $\ell^{\eta_n}$ is a $\F$-martingale. The construction of $Y^S$ on $I$ (resp. $Y^B$ on $I^c$) yields  $\lim_{n\rightarrow \infty} \eta_n = 1$. Therefore, $\ell$ is a positive $\F$-local martingale, hence also a supermartingale, on $[0,1)$.

  Define $\ell_1:= \lim_{t\rightarrow 1} \ell_t$, which exists and is
  finite due to Doob's supermartingale convergence theorem. This implies $h(Y_{1-}, 1)>0$ on $I$ (resp. $1-h(Y_{1-}, 1)>0$ on $I^c$). Recall that $Y^S$ on $I$ (resp. $Y^B$ on $I^c$) does not jump at time $1$ almost surely. Therefore $h(Y_{1}, 1)>0$ on $I$ (resp. $1-h(Y_{1}, 1)>0$ on $I^c$), which yields $Y_1 \geq y_1$ on $I$ (resp. $Y_1 < y_1$ on $I^c$).

  Let us now prove the second statement for $Y^B$. The statement for $Y^S$ can be shown similarly.
  In view of the $\F$-intensity of $Y^B$ calculated in Lemma \ref{lem:
    F intensity}, one has that, for each $i \geq 1$
  \[
   Y^B_{\cdot \wedge \tau_i \wedge 1} -\beta \pare{\indic_I \int_0^{\cdot\wedge \tau_i \wedge 1} \frac{h(Y_{u-}+1, u)}{h(Y_{u-}, u)} du+ \indic_{I^c} \int_0^{\cdot\wedge \tau_i \wedge 1} \frac{1-h(Y_{u-}+1, u)}{1-h(Y_{u-}, u)}du}
  \]
  is an $\F$-martingale. We will show in the next paragraph that, when stopped at $\tau_i\wedge 1$, $Y^B$ is a Poisson process in $\F^Y$ by  showing that  $(Y^B_{t\wedge \tau_i} - \beta (t\wedge \tau_i))_{t\in [0,1]}$ is a $\F^Y$-martingale--recall that $\tau_i$ is an $\cF^Y$-stopping time. This in turn will imply that $Y^B$ is a Poisson process with intensity $\beta$ on $[0,\tau \wedge 1)$ where $\tau=\lim_{i \rar \infty} \tau_i$ is the explosion time. Since Poisson process does not explode, this will further imply $Y^B_{\tau \wedge 1}< \infty$ and, therefore, $\tau\geq 1,\, \prob$-a.s. in view of Remark \ref{r:yfinite}.

  We proceed by projecting the above martingale into $\F^Y$ to see that
  \[
   Y^B- \beta \int_0^\cdot \bra{\prob(I\,|\, \F^Y_u) \frac{h(Y_{u-}+1, u)}{h(Y_{u-}, u)} + \prob(I^c\,|\, \F^Y_u) \frac{1-h(Y_{u-}+1, u)}{1-h(Y_{u-}, u)}}du
  \]
  is a $\F^Y$-martingale when stopped at $\tau_i\wedge 1$. Therefore, it remains to show that, for
  almost all  $t\in [0,1)$, on $[t \leq \tau_i]$
  \[
  \prob(I\,|\, \F^Y_t) \frac{h(Y_{t-}+1, t)}{h(Y_{t-}, t)} + \prob(I^c\,|\, \F^Y_t) \frac{1-h(Y_{t-}+1, t)}{1-h(Y_{t-}, t)} =1.
  \]
  In the remaining of the proof, we will show that on $[t \leq \tau_i]$
  \begin{equation}\label{eq: prob I}
   \prob(I \,|\, \F^Y_t) = h(Y_{t}, t) \quad \text{ and } \quad \prob(I^c\,|\, \F^Y_t) = 1- h(Y_{t}, t), \quad \text{ for } t\in[0,1).
  \end{equation}
  The statement then follows since $Y_t \neq Y_{t-}$ only for
  countably many times.

 We have seen  that $(\ell_{u\wedge \tau_i})_{u\in[0,t]}$ is a
  strictly positive $\F$-martingale for each $i$. Define a
  probability measure $\qprob^i \sim \prob$ on $\F_t$ via
  $d \qprob^i/d \prob |_{\F_t} = \ell_{\tau_i \wedge t}$. It
  follows from a simple application of Girsanov's theorem that
  $(Y^B_{\cdot})$ and $(Y^S_{\cdot})$ are
  Poisson processes when stopped at $\tau_i \wedge t$ and  with
  intensity $\beta$ under $\qprob^i$. Therefore, they are independent
  from $I$ under $\qprob^i$.   Then, for $t< 1$ we obtain from Bayes' formula that
  \begin{equation}\label{eq: p=h}
  \begin{split}
   \indic_{[u\leq \tau_i \wedge t]} \prob(I \,|\, \F^Y_u) & = \indic_{\{u\leq \tau_i \wedge t\}} \frac{E_{\qprob^i}\bra{ \indic_I \ell^{-1}_{u} \,|\, \F^Y_u}}{E_{\qprob^i}\bra{\ell^{-1}_{u}\,|\, \F^Y_u}}\\
   &=  \indic_{[u\leq \tau_i\wedge t]} \frac{E_{\qprob^i} \bra{\indic_I \frac{h(Y_u, u)}{h(0,0)} \,|\, \F^Y_u}}{E_{\qprob^i} \bra{\indic_I \frac{h(Y_u, u)}{h(0,0)} + \indic_{I^c} \frac{1-h(Y_u, u)}{1-h(0,0)} \,|\, \F^Y_u}}\\
   &= \indic_{[u\leq \tau_i \wedge t]} h(Y_u, u),
  \end{split}
  \end{equation}
  where the third identity follows from the aforementioned
  independence of $Y$ and $I$ under $\qprob^i$ along with the fact
  that $\qprob^i$ does not change the probability of $\F_0$ measurable
  events, so that $\qprob^i(I) = \prob(I) = h(0,0)$. As a result, \eqref{eq: prob I} follows from \eqref{eq: p=h} after sending $i\rightarrow \infty$.

  Finally, since $Y^B$ and $Y^S$ are Poisson processes in $\F^Y$ and they do not jump simultaneously by their construction, they are independent (see \cite[Proposition 5.3]{Cont-Tankov}). Since $Y^B$ and $Y^S$ are independent Poisson processes, it also follows $\expec[X^B_1]< \infty$. Indeed, since $X^B_1 \indic_{I^c}=0$, we have $\expec[X^B_1] =\expec[X^B_1 \indic_I]=\expec[(Y^B_1-Z^B_1+X^{B,S})\indic_I]\leq \expec[Y^B_1]+ \expec[Z^B_1] + \expec[Z^S_1] <\infty$. Similar arguments also show that $\expec[X^S_1]<\infty$. Hence, the constructed strategy $(X^B, X^S; \F^I)$ is admissible.
\end{proof}

\section{Existence and convergence of Glosten-Milgrom equilibria}\label{sec: equilibrium existence}
In view of the results of  Section \ref{sec: bridge construction}, we can now show that a Glosten-Milgrom equilibrium exists for the market model under consideration.

\begin{thm}\label{thm: existence equilibrium}
Suppose that $(\eta_i)_{i \geq 1}$ and $(\zeta_i)_{i \geq 1}$ are two sequences of independent $\F$-measurable random variables uniformly distributed over $[0,1]$ that are independent from each other, $Z$ and $\tv$. If there exists a $y_{\delta}$ such that
\[
\bbP(Z_1 \geq y_{\delta})=\bbP(\tv=1),
\]
and $\cF^I$ is the right continuous augmentation of $(\sigma(\tv, Z_s, \eta_i, \zeta_i; s\leq t, i \geq 1))_{t \in [0,1]}$ with the $\bbP$-null sets, then there exists a Glosten-Milgrom equilibrium.
\end{thm}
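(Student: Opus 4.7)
The plan is to produce the equilibrium quadruplet $(p,X^B,X^S,\F^I)$ directly from the bridge construction of Section \ref{sec: bridge construction} and then check the three bullets of Theorem \ref{t:equilibrium}. Take $I:=[\tv=1]\in\F_0$. The hypothesis $\bbP(Z_1\geq y_\delta)=\bbP(\tv=1)$ rewrites, in the notation of Section \ref{sec: bridge construction}, as $\bbP(I)=h(0,0)$, which is precisely the compatibility condition required for the construction. After rescaling so that $\delta=1$, I would feed the random set $I$ and the iid uniform sequences $(\eta_i)_{i\geq 1},(\zeta_i)_{i\geq 1}$ into the inductive recipe of Section \ref{sec: bridge construction} to build $X^{B,B},X^{B,S}$ on $I$ and analogously $X^{S,S},X^{S,B}$ on $I^c$, setting $X^B:=(X^{B,B}+X^{B,S})\indic_I$ and $X^S:=(X^{S,S}+X^{S,B})\indic_{I^c}$. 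Then $Y:=Z+X^B-X^S$ coincides with the process from \eqref{eq: Y decomp}.

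Next I would apply Proposition \ref{p: Y property}: part (i) gives $[Y_1\geq y_\delta]=I=[\tv=1]$ $\bbP$-a.s.; part (ii) gives that, in $\F^Y$, $Y=Y^B-Y^S$ with $Y^B/\delta$ and $Y^S/\delta$ independent $\F^Y$-adapted Poisson processes of intensity $\beta$; and part (iii) gives $\expec[X^B_1]+\expec[X^S_1]<\infty$. Together with the construction's feature that $X^B\equiv 0$ on $[\tv=0]$ and $X^S\equiv 0$ on $[\tv=1]$, this verifies bullet (iii) of Theorem \ref{t:equilibrium}. Taking $p:=p^{y_\delta}$ as in \eqref{eq: pz} supplies bullet (ii) once admissibility is checked.

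For admissibility in the sense of Definition \ref{def: admissible}, I would set $\cH_t:=\sigma(\eta_i,\zeta_i\,;\,i\geq 1)$, constant in $t$. By the hypothesis on the uniforms, $\cH$ is independent of both $\tv$ and $\cF^Z$, and the $\cF^I$ defined in the statement equals $\sigma(\tv,\cF^Z_t,\cH_t)$ after right-continuous completion, so condition (i) holds. Adaptedness and integrability of $X^B,X^S$ come from the construction and Proposition \ref{p: Y property}(iii). For the absolute continuity of the $(\cF^I,\bbP)$-dual predictable projections required in (iii), I would invoke Lemma \ref{lem: F intensity}: on $I$, $Y^B$ has $\F$-intensity $\beta h(Y_{t-}+1,t)/h(Y_{t-},t)$ which, combined with the fact that $Z^B$ has the constant intensity $\beta$, yields an $\cF^I$-intensity for $X^{B,B}$ and $X^{B,S}$ that is an absolutely continuous function of time; the symmetric argument handles $X^S$ on $I^c$. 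This secures the admissibility of $(X^B,X^S;\cF^I)$.

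It then remains to verify bullet (i), rationality of $p^{y_\delta}$, and the optimality implicit in bullet (ii). For rationality, since Proposition \ref{p: Y property}(ii) yields that $Y$ has the same $\F^Y$-distribution as $Z$ under its natural filtration, the Markov property and the defining relation \eqref{eq: pz} give
\[
\expec[\tv\,|\,\cF^Y_t]=\bbP[Y_1\geq y_\delta\,|\,\cF^Y_t]=p^{y_\delta}(Y_t,t),\qquad t\in[0,1].
\]
For optimality of the strategy given $p^{y_\delta}$, I would invoke Proposition \ref{p: UB value func}: the condition $[Y_1\geq y_\delta]=[\tv=1]$ combined with $X^{S,j}\equiv 0$ on $[\tv=1]$ and $X^{B,j}\equiv 0$ on $[\tv=0]$ (both built into the construction, noting Remark \ref{rem: no top up}) meets the necessary-and-sufficient condition for optimality there. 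The main obstacle I anticipate is the bookkeeping around admissibility, specifically checking that $\cF^I$ as defined is actually right-continuous and rich enough to support the $\F$-stopping times $(\tau_i^\pm)$ and $(\nu_i)$ demanded by the construction of Section \ref{sec: bridge construction}; everything else reduces to direct citation of Proposition \ref{p: Y property} and Proposition \ref{p: UB value func}.
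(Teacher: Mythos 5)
Your proposal is correct and follows essentially the same route as the paper's own (quite terse) proof: invoke the equilibrium characterisation of Theorem \ref{t:equilibrium}, feed $I=[\tv=1]$ and the uniform sequences into the Section \ref{sec: bridge construction} construction, and cite Proposition \ref{p: Y property} to verify the required properties. You merely unpack the admissibility, rationality, and optimality checks that the paper leaves to the reader, and the measurability concern you flag at the end is exactly what the paper handles by the standing assumptions imposed on the filtration during the bridge construction.
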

\begin{proof} In view of Theorem \ref{t:equilibrium}, an equilibrium
  exists if $Y$ satisfies the conditions stated in Theorem
  \ref{t:equilibrium} and the high type (resp. low type) insider never
  sells (resp. buys). However, the insider can use the uniform random
  variables available in her filtration to perform the construction
  described in Section \ref{sec: bridge construction} so that $Y$
  satisfies the desired properties, due to Proposition \ref{p: Y
    property}, without having to sell (resp. buy) when high type
  (resp. low type).
\end{proof}

In the remainder of this section we will analyse what happens when the
trade size becomes small $(\delta \rightarrow 0)$ and the noise trades
arrive more frequently $(\beta \rightarrow \infty)$. A similar convergence
has also been studied by Back and Baruch in \cite{Back-Baruch} who
have established that the limiting economy can be described by a Kyle-Back
equilibrium. We would like to mention at this point that Back and
Baruch have proved their convergence results under some extra
hypotheses on the convergence of value functions which may be hard to verify. As we shall see below, we will verify the convergence via a weak convergence approach and we do not need any extra assumptions
in addition to the ones which have  already been assumed. Before performing a weak convergence analysis of
Glosten-Milgrom equilibria, whose existence is justified by  Theorem
\ref{thm: existence equilibrium}, let's first briefly describe what we
mean by a Kyle-Back equilibrium.

The continuous-time model of Kyle \cite{Kyle}, which was later
extended by Back \cite{Back}, studies the equilibrium pricing of a
risky asset whose liquidation value at time $1$ is given by
$\tilde{v}$. In this model, the cumulative noise trades is modelled by
a Brownian motion, denoted with $W$, independent of $\tilde{v}$. The
risk neutral insider knows the true liquidation value from the
beginning and competition among the risk neutral market makers forces
them to quote prices as conditional expectations of $\tilde{v}$ based
on their information. The price is again set in a Markovian manner,
i.e. there exists a function $p^0:\bbR\mapsto [0,1]$ so that the
market price is given by $p^0(Y_t,t)$ at time $t$ where $Y$ is, as
before, the cumulative demand at time $t$.

Let $\Omega^0 = \mathbb{D}([0,1], \Real)$ be the space of
$\Real$-valued \cadlag \, functions on $[0,1]$ with the coordinate
process $Y^0$ and $\bbP^0$ be the Wiener measure. In view of the results of
\cite{Back} and \cite{CCD}, the  equilibrium price of the risky asset
in this economy is given by
\be \label{e:kbp}
 p^{0}(y, t) := \prob^0_y\bra{Y^0_{1-t}\geq y_0},
\ee
where
\[
 y_0 := \Phi^{-1}(1-{\prob}(\tv=1)),
\]
and $\Phi(\cdot) = \int_{-\infty}^\cdot \frac{1}{\sqrt{2\pi}}
e^{-x^2/2} dx$.
The equilibrium demand satisfies the SDE
\be \label{e:kbY}
Y= W +\indic_{[\tilde{v}=1]}\int_0^\cdot{\partial_y}
\log p^0(Y_s,s)\,ds+\indic_{[\tilde{v}=0]}\int_0^\cdot{\partial_ y}\log(1- p^0(Y_s,s))\,ds.
\ee
\begin{rem} \label{r:kbe} Strictly speaking, the equilibrium price (\ref{e:kbp})
  and demand (\ref{e:kbY}) in this economy do not follow directly from the
  results of \cite{Back} and \cite{CCD} since in their framework
  $\tilde{v}$ has a continuous distribution. However, if one follows the
  arguments for the description of equilibrium given in \cite{CCD}, it
  follows that in equilibrium the insider trades so that
  $\tilde{v}=p^0(Y_1,1)$ and $Y$ is a Brownian motion in its own
  filtration. This immediately gives  (\ref{e:kbp}) as the equilibrium
  price, since the price follows a martingale with respect to the
  filtration of the market maker, which is the same as the filtration
  generated by $Y$. Moreover,  the same characterisation gives that
  the SDE satisfied by $Y$ with respect to the filtration of the
  insider is the same as the SDE satisfied by a standard Brownian
  motion when its natural filtration is initially
  enlarged with the random variable corresponding to its time $1$
  value being larger than $y_0$. The standard arguments contained in,
  e.g. Section 1.3 of \cite{Mansuy-Yor}, gives (\ref{e:kbY}).
\end{rem}
In view of the well-known results on the weak convergence of a
sequence of difference of Poisson processes to Brownian motion (see,
e.g., Theorem 5.4 in Chapter~6 of \cite{Ethier-Kurtz}), it is
easy to see that the cumulative demand of noise traders in a Kyle-Back
model can be considered as the weak limit of noise demands in a
sequence of Glosten-Milgrom models. Based on this observation it is
natural to ask whether the Kyle-Back equilibrium is the weak limit of
Glosten-Milgrom equilibria.

We now return to give an affirmative answer to this question. More
precisely, we consider the convergence of Glosten-Milgrom equilibria to the
Kyle-Back equilibrium described by (\ref{e:kbp}) and (\ref{e:kbY}). In
what follows, the superscript $\delta\geq 0$ indicates the trade size
associated to different processes, probabilities, random variables,
and functions. 

Let $(\Omega^\delta, \F^\delta, (\F^\delta_t)_{t\in[0,1]}, \prob^\delta)_{\delta \geq 0}$ be a sequence of probability spaces on which the Glosten-Milgrom models of different order sizes are defined. When $\delta >0$,
$\Omega^\delta = \mathbb{D}([0,1], \delta \Integer)$ is the space of
$\delta\Integer$-valued \cadlag\, functions on $[0,1]$ with the
coordinate process $Y^\delta$, $(\F^\delta_t)_{t\in[0,1]}$ is the
minimal right continuous and complete filtration generated by
$Y^\delta$, and $\prob^\delta$ for $\delta >0$ is the probability
measure under which $Y^\delta$ is the difference of two independent
Poisson processes with the same intensity $\beta^\delta$. $\bbP^0$ is
the Wiener measure as mentioned in the earlier paragraphs.

To construct a sequence of pricing rules in Glosten-Milgrom equilibria which converges to the Kyle-Back equilibrium, set
\[
 y_\delta := \inf\{y\in \delta \Integer, \, \prob^\delta(Y^\delta_1 \leq y) \geq 1-{\prob}(\tv=1)\}, \quad \text{ for } \delta >0,
\]
and denote $p^{y_\delta}$, defined in \eqref{eq: pz}, by $p^\delta$ for simplicity. To ensure the existence of Glosten-Milgrom equilibria with pricing rules $(p^\delta)_{\delta >0}$, we introduce a sequence of Bernoulli random variables $(\tv^\delta)_{\delta>0}$ whose distribution is
\begin{equation}\label{eq: dist tv delta}
{\prob}(\tv^\delta =1) = \prob^\delta(Y^\delta_1 \geq y_\delta).
\end{equation}
These $(\tv^\delta)_{\delta>0}$ will be the liquidation values of the risky asset in the sequence of Glosten-Milgrom models which converges to the Kyle-Back model.

\begin{thm}\label{thm: convergence}
For any $\tilde{v}$ satisfying  ${\prob}(\tv =1) \in (0,1)$, there exists a sequence of admissible strategies $(X^{B, \delta}, X^{S, \delta})_{\delta>0}$ such that, for each $\delta>0$, $(p^\delta, X^{B, \delta}, X^{S,\delta})$ is a Glosten-Milgrom equilibrium whose fundamental value of the risky asset is $\tv^\delta$.

 When the intensity of Poisson process is given by $\beta^\delta = (2\delta^2)^{-1}$ in the Glosten-Milgrom model, as $\delta \rightarrow 0$, the sequence of Glosten-Milgrom equilibria converge to the Kyle-Back equilibrium in the following sense:
 \begin{enumerate}
 \item[i)] The bid and ask prices in these Glosten-Milgrom equilibria
   converge to the price in the Kyle-Back equilibrium. That is,
   $\lim_{\delta\downarrow 0} a^{\delta}(y, t) =
   \lim_{\delta\downarrow 0} b^{\delta}(y, t)= \lim_{\delta\downarrow
     0} p^{\delta}(y, t) = p^{0}(y, t)$ for $(y, t) \in \Real \times
   [0,1)$.
Moreover, the corresponding market depths in the Glosten-Milgrom
equilibria converges to the market depth in the Kyle-Back equilibrium:
     \begin{equation*}
  \lim_{\delta\downarrow 0} \frac{1}{\delta} \pare{a^{\delta}(y, t) - p^{\delta}(y, t)} =\lim_{\delta \downarrow 0} \frac{1}{\delta} \pare{p^{\delta}(y, t) - b^{\delta}(y,t)} = \partial_y p^{0}(y,t), \quad \text{ for } (y,t)\in \Real \times [0,1).
 \end{equation*}
 \item[ii)]  Let $Y^{0, H}$ and $Y^{0, L}$ be the solutions to the following two SDEs, respectively,
     \[
      dY_t = \frac{\partial_y p^{0}(Y_t, t)}{p^{0}(Y_t, t)} \,dt + dW_t \quad \text{ and } \quad
      dY_t = -\frac{\partial_y p^{0}(Y_t, t)}{1-p^{0}(Y_t, t)} \,dt +
      dW_t, \qquad t \in [0,1),
     \]
     where $W$ is a Brownian motion under $(\Omega, \F^0, (\F^0_t)_{t\in[0,1]}, \prob^0)$. Define
     \[
      B^{0}_\cdot = \int_0^{\cdot} \frac{\partial_y p^{0}(Y^{0,H}_t, t)}{p^{0}(Y^{0,H}_t, t)} \,dt \quad \text{ and } \quad
      S^{0}_\cdot = \int_0^{\cdot} \frac{\partial_y p^{0}(Y^{0,L}_t, t)}{1-p^{0}(Y^{0,L}_t, t)} \,dt.
     \]
     Then,
     \begin{itemize}
      \item When $\tv=1$, $X^{B, \delta} \overset{\mathscr{L}}{\rightarrow} B^0$;
      \item When $\tv=0$, $X^{S, \delta} \overset{\mathscr{L}}{\rightarrow} S^0$,
     \end{itemize}
     where $\overset{\mathscr{L}}{\rightarrow}$ represents the convergence in law.

 \item[iii)]  $(p^{0}, Y^0)$ satisfies (\ref{e:kbp}) and (\ref{e:kbY})
   where
\[
Y^0=\indic_{[v=1]}Y^{0,H}+ \indic_{[v=0]}Y^{0,L}.
\]
As such, $p^0$ and $Y^0$ are the equilibrium price and demand in the
Kyle-Back equilibrium, respectively.
 \end{enumerate}
\end{thm}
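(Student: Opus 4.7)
The proof splits into existence, convergence of pricing functionals and market depth (part (i)), weak convergence of the insider's strategies (part (ii)), and identification of the limit as the Kyle-Back equilibrium (part (iii)). Existence is immediate from Theorem \ref{thm: existence equilibrium}, since the definitions of $y_\delta$ and $\tv^\delta$ in \eqref{eq: dist tv delta} are tailored to satisfy its hypothesis. The backbone of the convergence analysis is Donsker's theorem for differences of Poisson processes (Theorem 5.4, Chapter 6 of \cite{Ethier-Kurtz}): with $\beta^\delta = (2\delta^2)^{-1}$, $Z^\delta \Rar W$ in $\bbD([0,1], \bbR)$ under $\bbP^\delta$, and hence $Y^\delta \Rar W$ since $Y^\delta \eid Z^\delta$. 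A routine CLT argument gives $y_\delta \to y_0$ and $\bbP(\tv^\delta=1) \to \bbP(\tv=1)$.

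For part (i), since $p^\delta(y,t) = \bbP^\delta[Z^\delta_{1-t} \geq y_\delta - y]$ depends only on the one-dimensional law of $Z^\delta_{1-t}$, the convergence $p^\delta \to p^0$ and hence $a^\delta, b^\delta \to p^0$ follow from the CLT and the continuity of $p^0$ in $y$. The market depth convergence calls for a quantitative local CLT: the scaled difference $\delta^{-1}[p^\delta(y+\delta,t) - p^\delta(y,t)]$ equals $\delta^{-1}\bbP^\delta[Z^\delta_{1-t} \in [y_\delta - y - \delta,\, y_\delta - y)]$, which converges to the Gaussian density $\partial_y p^0(y,t)$. Alternatively, one can obtain the convergence of discrete derivatives by passing to the limit in \eqref{eq: pz eqn}, which in the scaling $\beta^\delta=(2\delta^2)^{-1}$ becomes the heat equation satisfied by $p^0$.

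For part (ii), I focus on $[\tv^\delta = 1]$ and $X^{B,\delta}$; the low type is symmetric. By Lemma \ref{lem: F intensity} the $\F^{I,\delta}$-intensities of $Y^{B,\delta}$ and $Y^{S,\delta}$ on $[\tv^\delta=1]$ are $\beta^\delta\, p^\delta(Y^\delta_{t-}\pm\delta,t)/p^\delta(Y^\delta_{t-},t)$, since $h^\delta = p^\delta$ follows from $Y^\delta \eid Z^\delta$. Tallying the intensities of the construction in Section \ref{sec: bridge construction}, the $\F^{I,\delta}$-compensator of $X^{B,\delta}$ (a finite-variation process with jumps of size $\delta$) equals
\[
A^{B,\delta}_t = \int_0^t \frac{p^\delta(Y^\delta_{s-}+\delta, s) - p^\delta(Y^\delta_{s-}-\delta, s)}{2\delta\, p^\delta(Y^\delta_{s-}, s)}\, ds.
\]
By part (i) the integrand converges uniformly on compacta in $\bbR \times [0,1)$ to $\partial_y p^0(y,s)/p^0(y,s)$. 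The martingale remainder $M^{B,\delta} := X^{B,\delta} - A^{B,\delta}$ has jumps of size $\delta$ with $\langle M^{B,\delta}\rangle = \delta\, A^{B,\delta}$, so Doob's inequality yields $\expec[\sup_{t\leq 1}|M^{B,\delta}_t|^2] = O(\delta) \to 0$. The Jacod-Shiryaev convergence criterion for semimartingale characteristics, applied on $[0, 1-\varepsilon]$ and then letting $\varepsilon \downarrow 0$, gives the joint convergence $(Y^\delta, X^{B,\delta}) \Rar (Y^{0,H}, B^0)$ under $\bbP(\,\cdot\,|\,\tv^\delta=1)$.

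For part (iii), setting $Y^0 := \indic_{[\tv=1]}Y^{0,H} + \indic_{[\tv=0]}Y^{0,L}$ and using $\partial_y p^0/p^0 = \partial_y \log p^0$ together with $-\partial_y p^0/(1-p^0) = \partial_y \log(1-p^0)$, one reads off \eqref{e:kbY} directly. The identification $p^0(y,t) = \bbP^0_y[Y^0_{1-t}\geq y_0]$ of \eqref{e:kbp} then follows from the Kyle-Back characterisation recalled in Remark \ref{r:kbe}, or equivalently by passing to the limit in the definition of $p^\delta$ together with $Y^\delta \eid Z^\delta$. The main obstacle I anticipate is the joint weak convergence in part (ii): tightness of $X^{B,\delta}$ must be verified on Skorokhod space, and one must justify the passage to the limit in $A^{B,\delta}$ despite the degeneracy of $p^\delta$ as $t \uparrow 1$ and as $|y-y_\delta| \to \infty$. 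A careful localisation combined with uniform estimates drawn from the local CLT of part (i) should resolve these issues.
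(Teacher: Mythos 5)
Your outline of parts (i) and (iii), and the existence claim via Theorem \ref{thm: existence equilibrium}, agree with the paper. For the market depth in part (i), the paper is more explicit than your ``quantitative local CLT'': it identifies $a^\delta - p^\delta$ as a point mass of the Skellam law $e^{-2\mu}I_{|k|}(2\mu)$ with $\mu = (1-t)\beta^\delta$ and invokes a Bessel-asymptotics result of Athreya to obtain the Gaussian density in the limit. Your alternative via passing to the limit in \eqref{eq: pz eqn} would require extra a priori regularity that the paper never needs.

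In part (ii) you take a genuinely different route. The paper first proves (as Lemma \ref{lem: cumulative order conv}) the weak convergence of the \emph{conditional} laws $\mathrm{Law}(Y^\delta \mid Y^\delta_1 \geq y_\delta) \Rightarrow \mathrm{Law}(Y^{0,H})$, not by semimartingale characteristics but by a conditioning trick: Donsker gives $\prob^\delta \Rightarrow \prob^0$ unconditionally, and because the conditioning set has probability bounded away from zero, tightness and finite-dimensional convergence transfer to the conditional measures. It then identifies the limit law as the Brownian $h$-transform. Finally it extracts $X^{B,\delta} = Y^\delta - Z^\delta$ and concludes via continuity of addition on Skorokhod space against a continuous limit. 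You instead work directly with the $\F^I$-compensator of $X^{B,\delta}$, which you derive correctly as
\[
A^{B,\delta}_t = \int_0^t \frac{p^\delta(Y^\delta_{s-}+\delta, s)-p^\delta(Y^\delta_{s-}-\delta, s)}{2\delta\,p^\delta(Y^\delta_{s-}, s)}\,ds,
\]
and then let the martingale remainder vanish and invoke a semimartingale-characteristics convergence theorem. This is a recognised and self-contained strategy, but it bears the full technical weight of the degeneracy at $t\uparrow 1$ and for $Y^\delta$ far from $y_\delta$: you need uniform-on-compacta convergence of the intensity ratio, a truncation to $[0,1-\varepsilon]$, and a separate control of the boundary layer in order to apply Jacod--Shiryaev. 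You flag this yourself at the end; it is genuinely the hard part of your route, and the bound $\expec[\sup_t |M^{B,\delta}_t|^2] = O(\delta)$ requires showing $\expec[A^{B,\delta}_1]$ stays bounded uniformly in $\delta$ (which is true but not immediate). The paper's conditioning argument sidesteps all of this precisely because it never has to localize the intensity near $t=1$; that is what it buys. What your route buys is that it simultaneously reproves $Y^\delta\mid I \Rightarrow Y^{0,H}$ and is perhaps more portable to settings where the unconditional Donsker theorem is unavailable, but in this problem the paper's Lemma \ref{lem: cumulative order conv} is the cleaner path.
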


The above theorem tells us that Kyle-Back model with Bernoulli distributed $\tv$ can be approximated by a sequence of Glosten-Milgrom models whose risky asset fundamental price converges to $\tv$ in distribution. Since there
is no bid-ask spread in the Kyle-Back equilibrium, the above
convergence results in particular tell us that the bid-ask spread gets
smaller and vanish in the limit as the frequency of noise trades
increase. Moreover the rate the convergence is $O(\delta)$.

To show the desired convergence results contained in the theorem above, let us first prove the
convergence in law of the cumulative order processes as seen in the
filtration, say $\overline{\F}^{\delta}$,
with respect to which $X^\delta$ and $Z^\delta$ are
adapted and $\tv \in \overline{\F}^{\delta}_0$. Note that this filtration is smaller than the filtration that
is assumed to be contained in the insider's filtration in Theorem
\ref{thm: existence equilibrium}, however, it contains all the
relevant processes and random variables describing insider's strategy
and the informational advantage. Moreover, it will be enough to
limit ourselves to these filtrations in order to prove Theorem
\ref{thm: convergence}. Recall from Section \ref{sec: bridge
  construction} that,
for each $\delta>0$, the distribution of the cumulative order process in $\overline{\F}^{\delta}$ is the same as the distribution of $Y^\delta$ conditioned on $Y^\delta_1 \geq y_\delta$ or $Y^\delta_1 < y_\delta$. Here $Y^\delta/\delta$ is the difference of two Poisson processes in its own filtration.

\begin{lem}\label{lem: cumulative order conv}
 Let $\beta^\delta = (2\delta^2)^{-1}$. We have
 \[
  \text{Law}(Y^\delta \,|\, Y^\delta_1 \geq y_\delta) \Rightarrow \text{Law}(Y^{0,H}) \quad \text{ and } \quad  \text{Law}(Y^\delta \,|\, Y^\delta_1 < y_\delta) \Rightarrow \text{Law}(Y^{0,L}), \quad \text{ as } \delta \rightarrow 0,
 \]
 where $Y^{0,H}$ and $Y^{0,L}$ are defined in Theorem \ref{thm: convergence} ii) and $\Rightarrow$ represents the weak convergence of probability measures.
\end{lem}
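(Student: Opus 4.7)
The plan is to rewrite the conditional expectation as a ratio of unconditioned expectations, pass to the limit through the weak convergence $Y^\delta \Rightarrow W$ of the unconditioned processes, and then identify the limit with $\text{Law}(Y^{0,H})$ via the enlargement of filtration recipe recalled in the introduction. I focus on the high-type statement; the low-type one follows by reflecting $Z$ and $y_\delta$.

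For any bounded continuous functional $F : D([0,1],\Real) \to \Real$, I start from the elementary identity
\[
 \bbE^\delta\bra{F(Y^\delta)\,|\,Y^\delta_1\geq y_\delta} = \frac{\bbE^\delta\bra{F(Y^\delta)\indic_{[Y^\delta_1\geq y_\delta]}}}{\bbP^\delta(Y^\delta_1\geq y_\delta)}.
\]
Under the scaling $\beta^\delta=(2\delta^2)^{-1}$, the Donsker-type invariance principle for differences of Poisson processes (Theorem 5.4 in Chapter 6 of \cite{Ethier-Kurtz}) yields $Y^\delta \Rightarrow W$ in $D([0,1],\Real)$, where $W$ is a standard Brownian motion under $\bbP^0$. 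Because $y_0$ is a continuity point of the limit law $\Phi$ of $Y^\delta_1$, the definition of $y_\delta$ forces $y_\delta \to y_0 = \Phi^{-1}(1-\bbP(\tv=1))$, so the denominator converges to $\bbP^0(W_1\geq y_0)\in(0,1)$, hence is bounded away from zero for all small $\delta$.

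For the numerator I would use a sandwich argument: given $\eta>0$, eventually $y_0-\eta<y_\delta<y_0+\eta$, so for nonnegative $F$ the indicator can be bracketed between $\indic_{[Y^\delta_1\geq y_0+\eta]}$ and $\indic_{[Y^\delta_1\geq y_0-\eta]}$. Since $y_0\pm\eta$ are continuity points of the law of $W_1$, the joint weak convergence $(Y^\delta,Y^\delta_1)\Rightarrow(W,W_1)$ together with the continuous mapping theorem passes the limit through each bound, giving $\bbE^0\bra{F(W)\indic_{[W_1\geq y_0\pm\eta]}}$. Letting $\eta\downarrow 0$ and using dominated convergence on the right, then decomposing $F=F^+-F^-$ for signed $F$, yields
\[
 \text{Law}(Y^\delta\,|\,Y^\delta_1\geq y_\delta) \Rightarrow \text{Law}(W\,|\,W_1\geq y_0) \quad\text{as}\quad \delta\downarrow 0.
\]

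It remains to identify the right-hand side with $\text{Law}(Y^{0,H})$. As recalled in the introduction (see also Section 1.3 of \cite{Mansuy-Yor}), when the natural filtration of $W$ is initially enlarged with $\indic_{[W_1\geq y_0]}$, the Doob--Meyer decomposition of $W$ reads $W_t=\hat B_t+\int_0^t\partial_y\log p^0(W_s,s)\,ds$ on $\{W_1\geq y_0\}$, where $\hat B$ is a Brownian motion in the enlarged filtration. Hence conditionally on $\{W_1\geq y_0\}$ the process $W$ solves the SDE defining $Y^{0,H}$; since $\partial_y\log p^0$ is smooth and bounded on compact subsets of $\Real\times[0,1-\epsilon]$ for every $\epsilon>0$, pathwise uniqueness on $[0,1)$ delivers $\text{Law}(W\,|\,W_1\geq y_0)=\text{Law}(Y^{0,H})$. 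The main delicate step is the sandwich passage to the limit, required because the threshold $y_\delta$ depends on $\delta$ while the indicator is discontinuous; it succeeds precisely because $W_1$ has a continuous distribution at $y_0$.
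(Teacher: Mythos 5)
Your proof is correct and reaches the same conclusion, but by a shorter route than the paper's. The paper establishes weak convergence of $\prob^{\delta,H}$ in two pieces: convergence of finite-dimensional distributions (proved with a sandwich around the moving threshold $y_\delta$, using the unconditional convergence $\prob^\delta\Rightarrow\prob^0$), plus a separate tightness verification obtained by bounding $\prob^{\delta,H}(A)\leq\prob^\delta(A)/\prob^\delta(Y^\delta_1\geq y_\delta)$ and inheriting tightness from the unconditioned laws. You collapse these two pieces into one by running the sandwich argument directly on $\bbE^\delta\bra{F(Y^\delta)\indic_{[Y^\delta_1\geq y_\delta]}}$ for an arbitrary bounded continuous $F:\bbD([0,1],\Real)\to\Real$. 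The ingredient you should make explicit is that the coordinate map $\omega\mapsto\omega(1)$ is continuous in the Skorokhod topology (the time changes fix the endpoints), so $\omega\mapsto F(\omega)\indic_{[\omega(1)\geq y_0\pm\eta]}$ is $\prob^0$-a.s.\ continuous because $\prob^0(W_1=y_0\pm\eta)=0$; the a.s.-continuous version of the continuous mapping theorem then gives $\bbE^\delta\bra{F(Y^\delta)\indic_{[Y^\delta_1\geq y_0\pm\eta]}}\to\bbE^0\bra{F(W)\indic_{[W_1\geq y_0\pm\eta]}}$, and $\eta\downarrow 0$ plus dominated convergence removes the $\delta$-dependence of the threshold. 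Since this holds for every bounded continuous $F$, weak convergence of the conditional law follows by definition, with no separate tightness step. Your identification of the limit with the law of $Y^{0,H}$ via initial enlargement of filtration with $\indic_{[W_1\geq y_0]}$ is the paper's $h$-transform in different words, and both close by invoking uniqueness (in law, which is what is actually needed; local Lipschitzness of the drift away from $t=1$ is adequate) for the SDE defining $Y^{0,H}$. In short: same sandwich idea, same identification step, but you trade the paper's fdd-plus-tightness scaffolding for one direct appeal to the a.s.-continuous CMT, which is a genuine simplification.
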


\begin{proof}
 The first convergence will be proved. The second convergence can be
 shown similarly. Since $\beta^\delta = (2\delta^2)^{-1}$, it follows
 from \cite[Theorem 5.4 in Chapter~6]{Ethier-Kurtz} that $\prob^\delta
 \Rightarrow \prob^0$ and, in particular, $Law(Y^\delta_1) \Rightarrow
 Law(Y^0_1)$. Observe that $y_\delta$ is the $(1-\prob(\tv=1))^{th}$ quantile of the distribution for $Y^\delta_1$ and the distribution of $Y^0_1$ is continuous. It then follows
 \begin{equation}\label{eq: y delta conv}
  \lim_{\delta \downarrow 0} y_\delta = y_0.
 \end{equation}

 Meanwhile the conditional distribution $Law(Y^\delta \,|\, Y^\delta_1 \geq y_\delta)$ is defined via
 \begin{equation}\label{eq: conditional law}
  \prob^{\delta, H}(A) := \frac{\prob^\delta(A, Y^\delta_1\geq y_\delta)}{\prob^\delta(Y^\delta_1 \geq y_\delta)}, \quad \text{ for } A\in \F^\delta.
 \end{equation}

We will show $\prob^{\delta, H} \Rightarrow \prob^{0,H}$ as $\delta
\downarrow 0$. This statement will follow once we show the finite dimensional
distributions of $Y^{\delta}$ converge weakly  to the finite
dimensional distributions of $Y^0$, and $(\prob^{\delta,
  H})_{\delta>0}$ is tight (see e.g. \cite[VI.3.20]{Jacod-Shiryaev}). We will prove both of these conditions using
the already observed convergence of $\bbP^{\delta}$ to $\bbP^0$.

To this end, we will first establish the the convergence of
$\prob^\delta(Y^\delta_1 \geq y_\delta)$ to $\prob^0(Y^0_1 \geq y_0)$.
Indeed, due to \eqref{eq: y delta conv}, there exists a sufficiently small $\delta_\epsilon$ such that $y_\delta \geq y_0 -\epsilon$ for $\delta \leq \delta_\epsilon$. Thus,
 \[
  \prob^\delta(Y^\delta_1 \geq y_\delta) \leq \prob^\delta(Y^\delta_1 \geq y_0-\epsilon) \rightarrow \prob^0(Y^0_1 \geq y_0-\epsilon), \quad \text{ as } \delta \downarrow 0,
 \]
 where the convergence follows from $Law(Y^\delta_1) \Rightarrow Law(Y^0_1)$ and the fact that the distribution of $Y^0_1$ is continuous at $y_0-\epsilon$. Then the previous inequality yields $\limsup_{\delta \downarrow 0} \prob^\delta(Y_1^\delta \geq y_\delta) \leq \prob^0(Y^0_1 \geq y_0)$ since the choice of $\epsilon$ is arbitrary. Combining the previous inequality with $\liminf_{\delta \downarrow 0} \prob^\delta(Y^\delta_1 \geq y_\delta) \geq \prob^0(Y^0_1\geq y_0)$, which can be similarly proved, we obtain
 \begin{equation}\label{eq: h delta conv}
  \lim_{\delta \downarrow 0} \prob^\delta(Y^\delta_1 \geq y_\delta) = \prob^0(Y^0_1 \geq y_0) = \prob(\tv=1)>0.
 \end{equation}

 In order to prove the convergence of the finite dimensional
distributions of $Y^{\delta}$,  we are first going to show
 \[
  \lim_{\delta \downarrow 0} \expec^{\prob^{\delta, H}}\bra{f(Y^\delta_{t_1}, \cdots, Y^\delta_{t_n})} = \expec^{\prob^{0, H}}\bra{f(Y^0_{t_1}, \cdots, Y^0_{t_n})},
 \]
 for arbitrary bounded continuous function $f: \Real^n \rightarrow
 \Real$ and $0\leq t_1\leq \cdots < t_n\leq 1$. However, similar
 arguments as those employed in the last paragraph yield
 \[
 \lim_{\delta \downarrow 0} \expec^{\prob^{\delta}}\bra{f(Y^\delta_{t_1}, \cdots, Y^\delta_{t_n}) \,\indic_{[Y^\delta_1 \geq y_\delta]}} = \expec^{\prob^{0}}\bra{f(Y^0_{t_1}, \cdots, Y^0_{t_n}) \,\indic_{[Y^0_1\geq \delta_0]}}.
 \]
 The claim then follows from combining the previous convergence with \eqref{eq: conditional law} and \eqref{eq: h delta conv}.

 To verify the tightness of $(\prob^{\delta, H})_{\delta>0}$, it is equivalent to prove the following two conditions (see \cite[Theorem VI.3.21]{Jacod-Shiryaev}):
 \begin{enumerate}
  \item for any $\epsilon>0$, there exist $\delta_\epsilon$ and $K\in \Real$ with
      \[
       \prob^{\delta, H}\pare{\sup_{0\leq t\leq 1} |Y^\delta_t| >K} \leq \epsilon, \quad \text{ for all } \delta \leq \delta_\epsilon;
      \]
  \item for any $\epsilon>0$ and $\eta>0$, there exists $\delta_{\epsilon, \eta}$ and $\theta_{\epsilon, \eta}$ such that
      \[
       \prob^{\delta, H}\pare{w'_1(Y^\delta, \theta_{\epsilon, \eta}) \geq \eta} \leq \epsilon, \quad \text{ for all } \delta \leq \delta_{\epsilon, \eta}.
      \]
      We refer reader to \cite[Chapter VI, Section 1a]{Jacod-Shiryaev} for the definition of $w'_1$.
 \end{enumerate}
 Observe that, since $\prob^\delta \Rightarrow \prob^0$,
 $(\prob^\delta)_{\delta>0}$ is tight which implies that  the
 two conditions above hold when $\prob^{\delta, H}$ is replaced by
 $\prob^{\delta}$.  Moreover, if  $A$ stands for the event
 $[\sup_{0\leq t\leq 1} |Y^\delta_t|>K]$ or $[w'_1(Y^\delta,
 \theta_{\epsilon, \eta}) \geq \eta]$, \eqref{eq: conditional law}
 along with \eqref{eq: h delta conv}  yields
 \[
  \prob^{\delta, H}(A) = \frac{\prob^{\delta}(A, Y^\delta_1\geq
    y_\delta)}{\prob^{\delta}(Y^\delta_1 \geq y_\delta)}\leq  \frac{\prob^{\delta}(A)}{\prob^{\delta}(Y^\delta_1 \geq y_\delta)} \leq \frac{\epsilon}{\prob^{\delta}(Y^\delta_1 \geq y_\delta)} \leq \frac{2\epsilon}{\prob^0(Y^0_1 \geq y_0)} \quad \text{ for sufficiently small } \delta,
 \]
 which confirms the aforementioned conditions for $\prob^{\delta, H}$.

 Finally, it remains to verify that $\prob^{0, H}$ is the law of
 $Y^{0,H}$. To this end, note that $\prob^{0,H}$ is the law of a
 Brownian motion conditioned on its time 1 value being larger than
 $y_0$. A standard calculation using the well-known h-transform
 technique gives the following semimartingale decomposition of $Y^0$
under $\bbP^{0,H}$:
 \[
  Y^0_t = \int_0^t \frac{\partial_y p^0(Y^0_u, u)}{p^0(Y^0_u, u)} \, du + \widetilde{W}_t, \quad t\in[0,1),
 \]
 where $\widetilde{W}$ is a $\bbP^{0,H}$-Brownian motion. Since $\partial_y p^0 / p^0$ is locally Lipschitz, the previous SDE has a unique solution in law, therefore $\prob^{0,H}$ must be the law of $Y^{0,H}$.
\end{proof}

We are now ready to prove the convergence results.

\begin{proof}[Proof of Theorem \ref{thm: convergence}]
 The existence of Glosten-Milgrom equilibria follows from \eqref{eq: dist tv delta} and Theorem \ref{thm: existence equilibrium} directly. We will prove the statements on convergence in what follows.
\begin{enumerate}
\item[i)]   First note that $\lim_{\delta \downarrow 0} p^{\delta}(y,t) =
 p^0(y,t)$ follows from the argument which leads to \eqref{eq: h delta
   conv}. Moreover, this immediately implies the convergence of bid and ask prices
 as given in  i) since  $a^\delta(y, t) = p^\delta(y+\delta, t)$ and
 $b^{\delta}(y, t) = p^{\delta}(y-\delta, t)$. To verify the
 convergence of the market depth, observe that
 \[
 \begin{split}
 a^\delta(y,t) - p^\delta(y,t) &= \prob^\delta_{y+\delta}[Y^\delta_{1-t} \geq y_\delta] - \prob^\delta_y[Y^\delta_{1-t}\geq y_\delta] = \prob^{\delta}_0[Y^\delta_{1-t} = y_\delta - y-\delta]\\
 &= \overline{\prob}\bra{\overline{Y}_{1-t} = \frac{y_\delta - y -\delta}{\delta}},
 \end{split}
 \]
 where $\overline{Y}_{1-t}$ is the difference of two independent
 Poisson random variables with the common parameter $(1-t)\beta =
 (1-t)(2\delta^2)^{-1}$ under $\overline\prob$. Recall that the
 difference of two independent Poissons has the so-called
 \emph{Skellam} distribution (see \cite{Skellam}). Thus,
 $\overline{\prob}(\overline{Y}_{1-t}= k) = e^{-2\mu} I_{|k|}(2\mu)$,
 where $I_{|k|}(\cdot)$ is the \emph{modified Bessel function of the
   second kind} and $\mu=(1-t)(2\delta^2)^{-1}$. As a result
 \[
 \begin{split}
  \frac{1}{\delta} (a^\delta(y, t) - p^\delta(y, t)) &= \frac{1}{\delta}\overline{\prob}\bra{\overline{Y}_{1-t} = \frac{y_\delta - y -\delta}{\delta}}\\
  &= \frac{1}{\delta} \exp\pare{-\frac{1-t}{\delta^2}} I_{\left|\frac{y_\delta-y-\delta}{\delta}\right|}\pare{\frac{1-t}{\delta^2}}\\
  &\rightarrow \frac{1}{\sqrt{2\pi (1-t)}} \exp\pare{-\frac{(y_0-y)^2}{2(1-t)}}, \quad \text{ as } \delta \downarrow 0.
 \end{split}
 \]
 Here the convergence follows from \eqref{eq: y delta conv} and
 \cite[Theorem 2]{Athreya}, which states that the density of the Skellam
 distribution converges to the density of the normal after appropriate
 rescaling. Similar argument shows that $(p^\delta(y,t)-
 b^\delta(y,t))/\delta$ converges to the same function. This
 establishes the convergence of market depths given in i) since $\partial_y p^0(y,t)$ is exactly the normal density above.

\item[ii)]  Recall from Section \ref{sec: bridge construction} and the
  discussion preceding Lemma \ref{lem: cumulative order conv} that,
  for each $\delta>0$, the distribution of the cumulative order
  process in $\overline{\F}^{\delta}$ on the set $[\tv=1]$
  (resp. $[\tv=0]$) is the same as the distribution of $Y^\delta$
  conditioned on $Y^\delta_1 \geq y_\delta$ (resp. $Y^\delta_1 <
  y_\delta$). However, Lemma \ref{lem: cumulative order conv} has already shown that $Law(Y^\delta \,|\, Y^\delta_1 \geq y_\delta) \Rightarrow Law(Y^{0,H})$, where $Y^{0,H} = B^0 + W$. Since $Law(Z^\delta) \Rightarrow Law(W)$ as $\delta \downarrow 0$, it follows from \cite[Proposition VI.1.23]{Jacod-Shiryaev} that $Law(X^{B, \delta}) \Rightarrow Law(B^0)$ as $\delta \downarrow 0$. The convergence of $Law(X^{S, \delta})$ can be similarly proved.

\item[iii)]  This now follows from  Remark \ref{r:kbe}.
\end{enumerate}
\end{proof}

\bibliographystyle{siam}
\bibliography{biblio}
\end{document}